\theoremstyle{plain}
 \newtheorem{theorem}{Theorem}[section]
 \newtheorem{lemma}[theorem]{Lemma}
 \newtheorem{proposition}[theorem]{Proposition}
 \newtheorem{conjecture}[theorem]{Conjecture}
\theoremstyle{definition}
 \newtheorem{definition}[theorem]{Definition}
\theoremstyle{remark}
 \newtheorem{remark}[theorem]{Remark}
\numberwithin{equation}{section}
\numberwithin{theorem}{section}
\newcommand\nc\newcommand
\DeclareMathOperator
\nc{\Red}[1]{\textcolor[rgb]{0.8,0,0}{[#1]}}
\nc{\Green}[1]{\textcolor[rgb]{0,0.5,0}{[#1]}}
\nc{\Blue}[1]{\textcolor[rgb]{0,0,0.8}{[#1]}}
\nc{\abbr}[1]{{\sc{\lowercase{#1}}}}
\nc{\abbrev}[1]{{\raisebox{-0.0pt}{\tt{\uppercase{\scalebox{0.75}{#1}}}}}}
\nc{\ii}{\mathrm{i}}
\dmo{\sign}{sign}
\dmo{\spt}{spt}
\dmo{\supp}{supp}
\dmo{\sym}{Sym}
\nc{\R}{\mathbb{R}}
\nc{\C}{\mathbb{C}}
\nc{\N}{\mathbb{N}}
\nc{\Z}{\mathbb{Z}}
\nc{\erdos}{Erd\H os }
\nc{\er}{Erd\H os--R\'enyi } 
\dmo{\ls}{\lesssim}
\dmo{\gs}{\gtrsim}
\def \<{\langle}
\def \>{\rangle}
\def \lf {\lfloor}
\def \rf {\rfloor}
\nc{\expo}[1]{\exp \left( #1 \rule{0mm}{3mm}\right)}
\DeclarePairedDelimiter\parentheses{\lparen}{\rparen}
\nc{\dd}{\mathrm{d}}
\dmo{\e}{\mathbb{E}}
\dmo{\var}{Var}
\dmo{\pr}{\mathbb{P}}
\dmo{\un}{\mathbbm{1}}
\nc{\eqd}{\,{\buildrel d \over =}\,}
\nc{\bad}{\mathcal{B}}
\nc{\event}{\mathcal{E}}
\nc{\good}{\mathcal{G}}
\nc{\pro}[1]{\mathbb{P}\parentheses*{#1 \rule{0mm}{0mm}}}
\nc{\set}[1]{\left\{ #1 \right\}}
\dmo{\I}{I}
\dmo{\tr}{tr}
\dmo{\rank}{rank}
\dmo{\rk}{Rank}
\dmo{\corank}{corank}
\def \tran {\mathsf{T}}
\nc{\Span}{\operatorname{span}}
\dmo{\per}{\text{per}}
\nc{\eps}{\varepsilon}
\nc{\ep}{\epsilon}
\nc{\mA}{\mathcal{A}}
\nc{\mB}{\mathcal{B}}
\nc{\mC}{\mathcal{C}}
\nc{\mD}{\mathcal{D}}
\nc{\mE}{\mathcal{E}}
\nc{\mF}{\mathcal{F}}
\nc{\mG}{\mathcal{G}}
\nc{\mH}{\mathcal{H}}
\nc{\mI}{\mathcal{I}}
\nc{\mJ}{\mathcal{J}}
\nc{\mK}{\mathcal{K}}
\nc{\mL}{\mathcal{L}}
\nc{\mM}{\mathcal{M}}
\nc{\mN}{\mathcal{N}}
\nc{\mO}{\mathcal{O}}
\nc{\mP}{\mathcal{P}}
\nc{\mQ}{\mathcal{Q}}
\nc{\mR}{\mathcal{R}}
\nc{\mS}{\mathcal{S}}
\nc{\mT}{\mathcal{T}}
\nc{\mU}{\mathcal{U}}
\nc{\mV}{\mathcal{V}}
\nc{\mW}{\mathcal{W}}
\nc{\mX}{\mathcal{X}}
\nc{\mY}{\mathcal{Y}}
\nc{\mZ}{\mathcal{Z}}
\nc \wY {\widetilde{Y}}
\nc \wA {\widetilde{A}}
\nc \tA {\widetilde{A}}
\nc \wR {\widetilde{R}}
\nc \wM {\widetilde{M}}
\nc \tnu {\check{\nu}}
\nc \hnu {\hat{\nu}}
\dmo \Sparse {Sparse}
\dmo \Flat {Flat}
\dmo \Proj	{Proj}
\dmo \circular {circ}
\nc{\muc}{\mu_{\circular}}
\dmo{\Her}{\mathbf{H}}
\dmo{\Res}{\mathbf{R}}
\nc{\tS}{\widetilde{S}}
\nc{\tT}{\widetilde{T}}
\dmo{\dist}{dist}
\dmo{\Switch}{Switch}
\dmo{\bxi}{\bs{\xi}}
\dmo{\eye}{\mathbf{I}_2}
\dmo{\jay}{\mathbf{J}_2}
\dmo{\codeg}{codeg}
\dmo{\discrep}{discrep}
\dmo{\edge}{disc}
\dmo{\expand}{exp}
\dmo{\II}{\mathbb{I}}
\dmo{\indic}{\1}	
\nc{\sph}{\mathbb{S}^{n-1}}
\nc{\ball}{\mathbb{B}^n}
\dmo{\Id}{Id}
\nc{\oneperp}{{\langle \1 \rangle^{\perp}}}
\nc{\iid}{i.i.d.}
\nc{\me}{{m}}
\nc{\mo}{{m}}
\nc{\ka}{k}
\dmo{\Else}{Else}
\nc{\ta}{\boldsymbol{\tau}}
\dmo{\Circ}{{circ}}
\dmo{\wnu}{\widetilde{\nu}}
\nc{\wH}{\widetilde{H}}
\dmo{\schur}{\circ}
\dmo{\co}{co}
\nc{\sig}{\sigma}
\nc{\ha}{\sigma_0}
\dmo{\Comp}{Comp}
\dmo{\Incomp}{Incomp}
\nc{\be}{z}
\nc{\M}{M}
\nc{\kapa}{\kappa}
\nc{\Sig}{\Sigma}
\begin{document}

\title[The circular law for weighted random regular digraphs]{The circular law for random regular digraphs with random edge weights}

\author{Nicholas Cook}
\address{Department of Mathematics, UCLA, Los Angeles, CA, USA 90095-1555}
\email{nickcook@math.ucla.edu}
\thanks{During the completion of this work the author was partially supported by NSF grant DMS-1266164.}

\date{\today}

\begin{abstract}
We consider random $n\times n$ matrices of the form $Y_n=\frac1{\sqrt{d}}A_n\circ X_n$, where $A_n$ is the adjacency matrix of a uniform random $d$-regular directed graph on $n$ vertices, with $d=\lfloor p n\rfloor$ for some fixed $p \in (0,1)$, and $X_n$ is an $n\times n$ matrix of i.i.d.\ centered random variables with unit variance and finite $4+\eta$-th moment (here $\circ$ denotes the matrix Hadamard product). We show that as $n\to \infty$, the empirical spectral distribution of $Y_n$ converges weakly in probability to the normalized Lebesgue measure on the unit disk.
\end{abstract}

\keywords{Random matrix; random regular digraph; logarithmic potential; singular values; non-normal matrix; universality.}
\subjclass[2010]{15B52, 60B20, 05C80}

\maketitle

\let\oldtocsubsection=\tocsubsection
\renewcommand{\tocsubsection}[2]{\hspace*{1cm}\oldtocsubsection{#1}{#2}}
\let\oldtocsubsubsection=\tocsubsubsection
\renewcommand{\tocsubsubsection}[2]{\hspace*{2.5cm}\oldtocsubsubsection{#1}{#2}}

\setcounter{tocdepth}{2}

\section{Introduction}			\label{sec:intro_ch5}

\subsection{Background}	\label{sec:background_ch5}

For an $n\times n$ matrix $M$ with complex entries, denote by 
\begin{equation}
\mu_M = \frac{1}{n} \sum_{i=1}^n \delta_{\lambda_i(M)}
\end{equation}
its empirical spectral distribution (ESD), where $\lambda_1(M),\dots, \lambda_n(M)\in \C$ are the eigenvalues of $M$, counted with multiplicity, and labeled in some arbitrary fashion. 
When $M$ is a random matrix, $\mu_M$ is a random probability measure.
In the following we denote the space of bounded continuous (resp.\ continuous and compactly supported) real-valued test functions over a space $\mX$ by $C_b(\mX)$ (resp.\ $C_c(\mX)$).

\begin{definition}[Convergence of random measures]		\label{def:weakprob}
Let $(\mu_n)_{n\ge 1}$ be a sequence of random Borel probability measures supported on $\mX= \R$ or $\C$, and let $\mu$ be another Borel probability measure on $\mX$. 
We say $\mu_n$ converges to $\mu$ \emph{weakly in probability} if for all $f\in C_b(\mX)$ the random variable $\int_\mX f d\mu_n$ converges in probability, i.e.\ for any $\eps>0$, 
\begin{equation}	\label{weakprob}
\pro{ \left|\int_{\mX} f d\mu_n - \int_{\mX} f d\mu\right|>\eps} \rightarrow 0 \quad\quad \text{as $n\rightarrow \infty$.}
\end{equation}
We say $\mu_n$ converges to $\mu$ \emph{weakly almost surely} if for all $f\in C_b(\mX)$ the random variable $\int_\mX f d\mu_n$ converges almost surely to $\int_\mX fd\mu$. We similarly define convergence of $\mu_n$ to $\mu$ \emph{vaguely in probability} and \emph{vaguely almost surely} by replacing the space $C_b(\mX)$ with $C_c(\mX)$ above.
\end{definition}

For random matrices with i.i.d.\ entries having finite second moment, the global distribution of the spectrum is asymptotically governed by the \emph{circular law}:
\begin{theorem}[Circular law]		\label{thm:circ}
Let $\xi$ be a complex-valued random variable with mean zero and variance one, and for each $n$ let $X_n=(\xi_{ij})_{1\le i,j\le n}$ be a matrix whose entries are i.i.d.\ copies of $\xi$.
Then the rescaled ESD $\mu_{\frac{1}{\sqrt{n}}X_n}$ converges weakly almost surely to $\mu_{\Circ}$, the uniform measure on $B_{\C}(0,1)$.
\end{theorem}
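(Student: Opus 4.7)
The plan is to follow Girko's Hermitization strategy, which has become the standard route to the circular law since its full resolution by Tao and Vu. The key observation is that for any $z\in\C$ and any $n\times n$ matrix $M$, the logarithmic potential of $\mu_M$ evaluated at $z$ satisfies
\begin{equation}
U_{\mu_M}(z) := \int_{\C} \log|w-z|\,d\mu_M(w) = \frac{1}{n}\log|\det(M-zI)| = \int_0^\infty \log(s)\,d\nu_{M-zI}(s),
\end{equation}
where $\nu_{M-zI}$ is the empirical distribution of singular values of $M-zI$. Since a probability measure on $\C$ is determined by its logarithmic potential (viewed as a distribution), convergence $\mu_{\frac{1}{\sqrt n}X_n}\to \mu_{\Circ}$ weakly in probability will follow from pointwise (in $z$) convergence in probability of $U_{\mu_{n^{-1/2}X_n}}(z)$ to $U_{\mu_{\Circ}}(z)$ for a.e.\ $z$, together with a tightness/uniform integrability input that rules out escape of mass. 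I would carry out the following steps.

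First, reduce to truncated entries. Using a standard truncation and centering argument, one may assume $\xi$ is bounded (at the cost of an error that is negligible in the Frobenius norm), so that the moment conditions required in the subsequent steps are all available.

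Second, establish convergence of the singular value distribution of $\frac{1}{\sqrt n}X_n-zI$ for every fixed $z\in\C$. For iid matrices this is a consequence of the Marchenko--Pastur--type analysis of the Hermitization
\begin{equation}
H_n(z)=\begin{pmatrix} 0 & \tfrac{1}{\sqrt n}X_n-zI \\ \tfrac{1}{\sqrt n}X_n^*-\bar z I & 0 \end{pmatrix},
\end{equation}
whose eigenvalues are the $\pm\sigma_i(\tfrac{1}{\sqrt n}X_n-zI)$. Standard Hermitian random matrix tools (resolvent/Stieltjes transform methods, or the moment method) identify the limiting symmetrized singular value distribution $\nu_z$ and verify that $\int \log(s)\,d\nu_z(s)=U_{\mu_{\Circ}}(z)$, so the desired pointwise limit of logarithmic potentials holds provided we can pass $\log$ under the limit.

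The main obstacle, and the step that consumes most of the technical effort, is justifying this interchange, i.e., controlling the contribution of small and large singular values to $\int \log s\, d\nu_{n^{-1/2}X_n-zI}(s)$. Tightness at $s=\infty$ is handled by a crude bound on the operator norm via a moment estimate. The delicate part is bounding the bottom of the spectrum: one needs that with high probability the smallest singular value $\sigma_n(\tfrac{1}{\sqrt n}X_n-zI)$ is at least $n^{-C}$, and moreover that only $o(n)$ singular values lie below $n^{-c}$. The first follows from polynomial least-singular-value bounds for shifted iid matrices (Rudelson--Vershynin in the subgaussian case, Tao--Vu in general), which require the finite $(2+\eta)$-moment assumption that is automatic here; the second comes from quantitative control on intermediate singular values (for example via Tao--Vu's negative second moment identity combined with a net argument on the columns). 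Together, these give that the family $\{\log(\cdot)\}$ is uniformly integrable with respect to $\nu_{n^{-1/2}X_n-zI}$ in probability, so the pointwise limit of logarithmic potentials holds in probability for a.e.\ $z\in\C$.

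Finally, I would assemble the pieces: dominated convergence against a smooth compactly supported test function $\varphi$ yields, using $\int\varphi\,d\mu_M = \frac{1}{2\pi}\int\Delta\varphi(z)\, U_{\mu_M}(z)\,dz$ (in the distributional sense), that $\int \varphi\, d\mu_{n^{-1/2}X_n} \to \int\varphi\,d\mu_{\Circ}$ in probability. Weak convergence in probability then upgrades to weak almost sure convergence by a standard subsequence/Borel--Cantelli argument (concentration of $\int f\,d\mu_{n^{-1/2}X_n}$ around its mean via McDiarmid-type inequalities for bounded entries, and a truncation to reduce to that case). This yields the circular law as stated.
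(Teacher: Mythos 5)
The paper does not actually prove this statement --- it is quoted as background and attributed to Tao and Vu \cite{TaVu:esd}, building on Girko, Bai, Mehta, and Edelman --- so there is no in-text proof to compare against. That said, your sketch correctly identifies the Girko Hermitization route and its key ingredients: expressing the logarithmic potential as a log-singular-value integral, establishing weak convergence of the singular value distribution of $\tfrac{1}{\sqrt n}X_n - zI$ via the $2n\times 2n$ linearization $H_n(z)$, and controlling the top and bottom of the singular spectrum (operator norm bound, least-singular-value bound, and a Wegner-type estimate on intermediate singular values) to make $\log$ uniformly integrable. This is precisely the architecture the present paper uses in \Cref{sec:highlevel_ch5} for its own main result, \Cref{thm:main}, on $\tfrac{1}{\sqrt{np}}A_n\circ X_n$: the four inputs there are \Cref{lem:herm}, \Cref{prop:sing}, \Cref{lem:opnorm}, \Cref{prop:least}, and \Cref{prop:small}. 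So the closest meaningful comparison is to that section, and your proposal mirrors it closely.

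One inaccuracy worth flagging: you assert the finite $(2+\eta)$-moment hypothesis needed for the Rudelson--Vershynin/Tao--Vu least-singular-value bounds ``is automatic here,'' but \Cref{thm:circ} assumes only a finite second moment. Proving the circular law under exactly the second-moment hypothesis was the main contribution of \cite{TaVu:esd}, and it is not a matter of directly invoking a least-singular-value bound after truncation: one must combine the truncation with a replacement principle showing that the logarithmic potentials of the truncated and original ensembles converge to the same limit, and the small-singular-value estimates must be shown to survive this comparison. Your sketch mentions truncation but elides the interaction between it and the uniform-integrability step, which is where the real technical difficulty of the Tao--Vu proof lies.
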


In the above form \Cref{thm:circ} was established by Tao and Vu in \cite{TaVu:esd}, building on important advances of Girko \cite{Girko84} and Bai \cite{Bai97}.
The circular law was earlier established by Mehta for the case that $\xi$ is a standard complex Gaussian \cite{Mehta}, and later by Edelman in the real Gaussian case \cite{Edelman:circ}. Prior to the work \cite{TaVu:esd} there were versions of \Cref{thm:circ} that made more restrictive assumptions on the distribution of $\xi$ \cite{Bai97, BaSi10:book,GoTi:circ,PaZh,TaVu:circ}.

The circular law has been applied to the study of dynamics of networks arising in fields ranging from neuroscience \cite{SCS:chaos, RaAb:neural, ARS:block} to ecology \cite{May72, AGBTAM15:foodwebs}.
\Cref{thm:circ} is mathematically interesting as an instance of the \emph{universality phenomenon} in random matrix theory: in the limit as $n\rightarrow \infty$, the global distribution of the spectrum of $X_n$ is determined completely by the first two moments of the entries $\xi_{ij}$. 
The theorem hence identifies a wide class of matrix ensembles lying in the \emph{circular law universality class}. 
For additional background and history on the circular law we point the reader to the survey \cite{BoCh:survey}. 

It is natural to ask whether this universality class extends to include some ensembles not covered by \Cref{thm:circ}, that is, if we can relax the independence, identical distribution, or moment hypotheses on the entries. 
Informally, one might expect an ensemble $M_n=(\xi_{ij})$ to exhibit the circular law if the entries $\xi_{ij}$ are centered around a common value, have reasonably bounded tails, are not too degenerate (i.e.\ their distributions do not concentrate too much near deterministic values), and are only weakly correlated. 

For i.i.d.\ matrices, the second moment hypothesis is sharp,
and the limiting spectral distribution for certain classes of heavy tail matrices was described in \cite{BCC:heavy}. 
Nevertheless, in \cite{Wood:sparse}, Wood showed that the circular law is robust under \emph{sparsification}: letting $X_n=(\xi_{ij})$ be an i.i.d.\ matrix and $B_n=(b_{ij})$ a matrix of i.i.d.\ Bernoulli($p$) indicator variables, independent of $X_n$, the empirical spectral distribution of $M_n:=\frac{1}{\sqrt{np}} B_n\schur X_n$ converges weakly in probability (see \Cref{def:weakprob} below) to the circular law, provided that $p=p(n)\gg n^{\eps -1}$ for any fixed $\eps>0$. 
(Here $\schur$ denotes the matrix Hadamard product, i.e.\ $B_n\schur X_n = (b_{ij}\xi_{ij})$.)
The lower bound on $p$ is near-optimal, as it is easy to see that for $p\asymp 1/n$ the matrix $M_n$ will have a linear proportion of trivial columns, so that the limiting spectral distribution will have an atom at zero. 

There has been considerable interest in extending the circular law to cover models with dependent entries. 
In \cite{Nguyen:uds}, Nguyen showed that the circular law holds for matrices drawn uniformly (according to volume measure) from the Birkhoff polytope of doubly stochastic matrices. 
This followed work of Bordenave, Caputo and Chafa\"i on random stochastic matrices, which still enjoy joint independence of the matrix rows. 
In \cite{AdCh}, Adamczak and Chafa\"i established the circular law for matrices whose distribution over $\mM_n(\R)$ is a log-concave and unconditional measure, generalizing Edelman's result for Gaussian matrices \cite{Edelman:circ}. 
Together with Wolff, the same authors showed in \cite{ACW:exchangeable} that the circular law holds for a matrix whose entries are exchangeable and have a finite moments of order $20+\eps$. 

A particularly motivating challenge in random matrix theory is to extend universality results to include adjacency matrices of random regular graph models. 
For integers $n\ge 1$ and $d\in [n]$ denote
\begin{equation}	\label{def:And}
\mA_{n,d} = \left\{ A \in \{0,1\}^{n\times n} :\; A\1 = A^\tran\1 = d\1 \right\},
\end{equation}
which is the set of 0--1 adjacency matrices for $d$-regular directed graphs (digraphs) on $n$ vertices, allowing self-loops.
(Here and throughout, $\1=\1_n$ denotes the column vector of all 1s.) 
One can view 
a uniform random element $A\in \mA_{n,d}$ 
as a discrete analogue of the doubly stochastic matrix considered by Nguyen in \cite{Nguyen:uds}. We note also that such matrices are not covered by the above-mentioned results in \cite{ACW:exchangeable}: while the rows and columns of $A$ are separately exchangeable, the entries themselves are not.

More generally one can consider random regular digraphs with random edge weights, i.e.\ matrices of the form $A\circ X$ with $A\in\mA_{n,d}$ uniform random and $X$ an i.i.d.\ matrix independent from $A$. For applications in neuroscience, where random matrices have been used as mathematically tractable models of synaptic matrices \cite{SCS:chaos,RaAb:neural, ARS:block}, random matrices supported on adjacency matrices for directed graphs are of interest  as they reflect the sparsity of natural neural networks.

We have the following augmented version of a conjecture of Bordenave and Chafa\"i from \cite{BoCh:survey} (only case (2) below is stated there).
\begin{conjecture}[Bordenave--Chafa\"i \cite{BoCh:survey}]		\label{conj:rrd}
Let $1\le d\le n-1$ and let $A$ be a uniform random element of $\mA_{n,d}$.
\begin{enumerate}
\item If $d=d(n)\le n/2$ satisfies $d\rightarrow \infty$ as $n\rightarrow \infty$, then $\mu_{\frac{1}{\sqrt{d}}A}$ converges to the circular law as $n\to \infty$. 
\item  If $d\ge3$ is fixed independent of $n$, then as $n\to\infty$, $\mu_{A}$ converges to the \emph{oriented Kesten--McKay law} on $\C$, with density given by
\begin{equation}	\label{kmlaw}
f_{KM}(w)= \frac1\pi\frac{d^2(d-1)}{(d^2-|w|^2)^2}1_{\set{|w|\le \sqrt{d}}}.
\end{equation}

\end{enumerate}

\end{conjecture}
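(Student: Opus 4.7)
The plan is to follow the Hermitization strategy pioneered by Girko: for each fixed $z \in \C$ we reduce convergence of $\mu_{A_n/\sqrt{d}}$ to statements about the empirical distribution $\nu_{n,z}$ of singular values of $\frac{1}{\sqrt{d}} A_n - zI$. Using the logarithmic potential identity
\begin{equation}
U_{\mu_{A_n/\sqrt{d}}}(z) = -\int_0^\infty \log t \, d\nu_{n,z}(t),
\end{equation}
a standard lemma of Bai (together with a simple tightness check for $\mu_{A_n/\sqrt{d}}$) reduces the claim $\mu_{A_n/\sqrt{d}} \to \muc$ weakly in probability to two ingredients: (a) pointwise Lebesgue-a.e.\ convergence in probability of $\int \log t \, d\nu_{n,z}(t)$ to the limiting log-potential $\int \log t \, d\nu_z(t)$, where $\nu_z$ is the measure whose log-potential matches that of $\muc$; and (b) uniform integrability of $\log t$ against $\nu_{n,z}$ for almost every $z$.

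For ingredient (a) I would first prove a global law: $\nu_{n,z}$ converges in probability to a deterministic measure $\nu_z$ agreeing with the iid Bernoulli$(p)$ analogue. In the regime $d/n \to p \in (0,1)$, this is accessible via the Stieltjes transform, by comparing the diagonal entries of the resolvent of $(A_n/\sqrt{d} - zI)(A_n/\sqrt{d} - zI)^*$ with those of the corresponding iid model; the doubly-stochastic constraint contributes a rank-one-type correction through the row and column sums that should be shown to be negligible in the large-$n$ limit by resolvent-perturbation identities or by a concentration-of-measure argument on $\mM_n(d)$.

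Ingredient (b), controlling the small singular values of $\frac{1}{\sqrt{d}} A_n - zI$, is the main obstacle. What is needed, with probability $1-o(1)$, are two quantitative bounds: a polynomial lower bound $s_n(A_n/\sqrt{d} - zI) \ge n^{-C}$, and a near-optimal small-ball estimate $\nu_{n,z}([0,t]) \lesssim t + n^{-c}$ down to the scale $t = n^{-C}$. In the weighted version $Y_n = \frac{1}{\sqrt{d}} A_n \schur X_n$ addressed by the paper's main theorem, the iid weights provide anti-concentration, permitting one to condition on $A_n$ and invoke the Littlewood--Offord / Rudelson--Vershynin machinery (compressible/incompressible decomposition of $\sph$, inverse Littlewood--Offord) on the random vectors $(A_n \schur X_n)e_k$; one must still quantify the ``typical" structural properties of $A_n$ (row and column supports, codegree regularity) that make this machinery effective. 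In the unweighted case (1) of \Cref{conj:rrd}, the rigidity of the row and column sums eliminates the coordinate-wise randomness that Littlewood--Offord exploits, so anti-concentration must be developed directly on the discrete manifold $\mM_n(d)$ via switching arguments --- a substantially more delicate combinatorial task, and in my view the central difficulty.

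Case (2) of the conjecture, with $d\ge 3$ fixed, appears to require an entirely different approach. The local neighborhoods of a uniform $d$-regular digraph converge in the Benjamini--Schramm sense to the infinite rooted $d$-regular directed tree, and one expects $f_{KM}$ to coincide with the Brown measure of a natural non-normal operator in a free probability framework (e.g.\ a sum of free Haar unitaries). Translating this heuristic into a proof is notoriously hard because Brown measure is discontinuous under $\ast$-moment convergence of non-normal operators; a rigorous proof would require quantitative uniform-in-$n$ lower tail estimates for the least singular value of $A_n - zI$ of the unweighted rrd matrix, together with delocalization estimates on associated eigenvectors, neither of which is currently available at fixed $d$.
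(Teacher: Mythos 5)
The statement you were assigned is \Cref{conj:rrd}, which the paper presents as an \emph{open conjecture} attributed to Bordenave and Chafa\"i---the paper does not prove it, and so there is no proof in the paper to compare your proposal against. What the paper actually proves is \Cref{thm:main}, the circular law for the randomly \emph{weighted} model $Y_n=A_n\schur X_n$, which is a genuinely different and easier statement: the iid weights $X_n$ restore the coordinate-wise independence that Littlewood--Offord-type anti-concentration relies on, and this is precisely what is missing in the unweighted rrd model of the conjecture.

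That said, your write-up is not a proof and, to your credit, does not claim to be one: you defer the key steps (anti-concentration on $\mM_n(d)$ via switchings for part (1); least-singular-value and delocalization estimates at fixed $d$ together with the Brown-measure discontinuity issue for part (2)) as open. These are exactly the obstacles the literature identifies, and your high-level description of the Hermitization reduction, the role of $\nu_{n,z}$ and its log-potential, the need for a global law for the singular values, and the need for uniform integrability near zero, all track the strategy the paper uses for its (weighted) \Cref{thm:main}. Two small corrections of emphasis: first, for the global law the paper does not do a resolvent comparison with a rank-one correction but instead uses the conditioning/contiguity argument of Tran--Vu--Wang together with the Canfield--McKay enumeration (\Cref{lem:tran}) to transfer concentration of linear statistics from the iid Bernoulli model to the constrained model, which sidesteps the need to control the doubly-stochastic constraint at the resolvent level; second, in the weighted case the structural input the paper needs about $A_n$ is not codegree regularity but a discrepancy/broad-connectivity property (\Cref{prop:discrep}), which then feeds into \Cref{prop:broad} and \Cref{thm:broad_small}. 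If you want to turn part (1) of the conjecture into a theorem, the genuine missing ingredient is indeed what you name: a quantitative least-singular-value bound and a Wegner-type estimate for $\frac1{\sqrt d}A_n - zI$ with no auxiliary edge randomness, which requires anti-concentration intrinsic to the uniform measure on $\mM_n(d)$; that problem is not addressed anywhere in this paper.
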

From the above one obtains results for $n/2<d\le n-1$ by considering the complementary matrix with entries $1-a_{ij}$, which has the same limiting spectral distribution.
See \Cref{fig:1} for some numerical evidence supporting \Cref{conj:rrd}.

The explicit density in \eqref{kmlaw} can be computed as the \emph{Brown measure} of the \emph{free sum} of $d$ Haar unitary operators -- see \cite[Example 5.5]{HaLa}. 
We note that in \eqref{kmlaw}, if one rescales $w$ by $\sqrt{d}$ and sends $d\rightarrow \infty$ the expression converges to the normalized indicator for the unit disk, which gives some evidence for part (1) of the conjecture. 
By contiguity results (see for instance \cite{Janson:contiguity}), to establish Conjecture \ref{conj:rrd} for fixed $d$ it suffices to consider a different measure, the sum of $d$ i.i.d.\ uniform permutation matrices. 
It was shown by Basak and Dembo in \cite{BaDe} that the sum of $d$ i.i.d.\ Haar unitary or orthogonal matrices has limiting spectral distribution given by \eqref{kmlaw}, so Conjecture \ref{conj:rrd} posits that their result should hold if the unitaries are restricted to permutation matrices. 

\begin{figure}
\includegraphics[width=350pt]{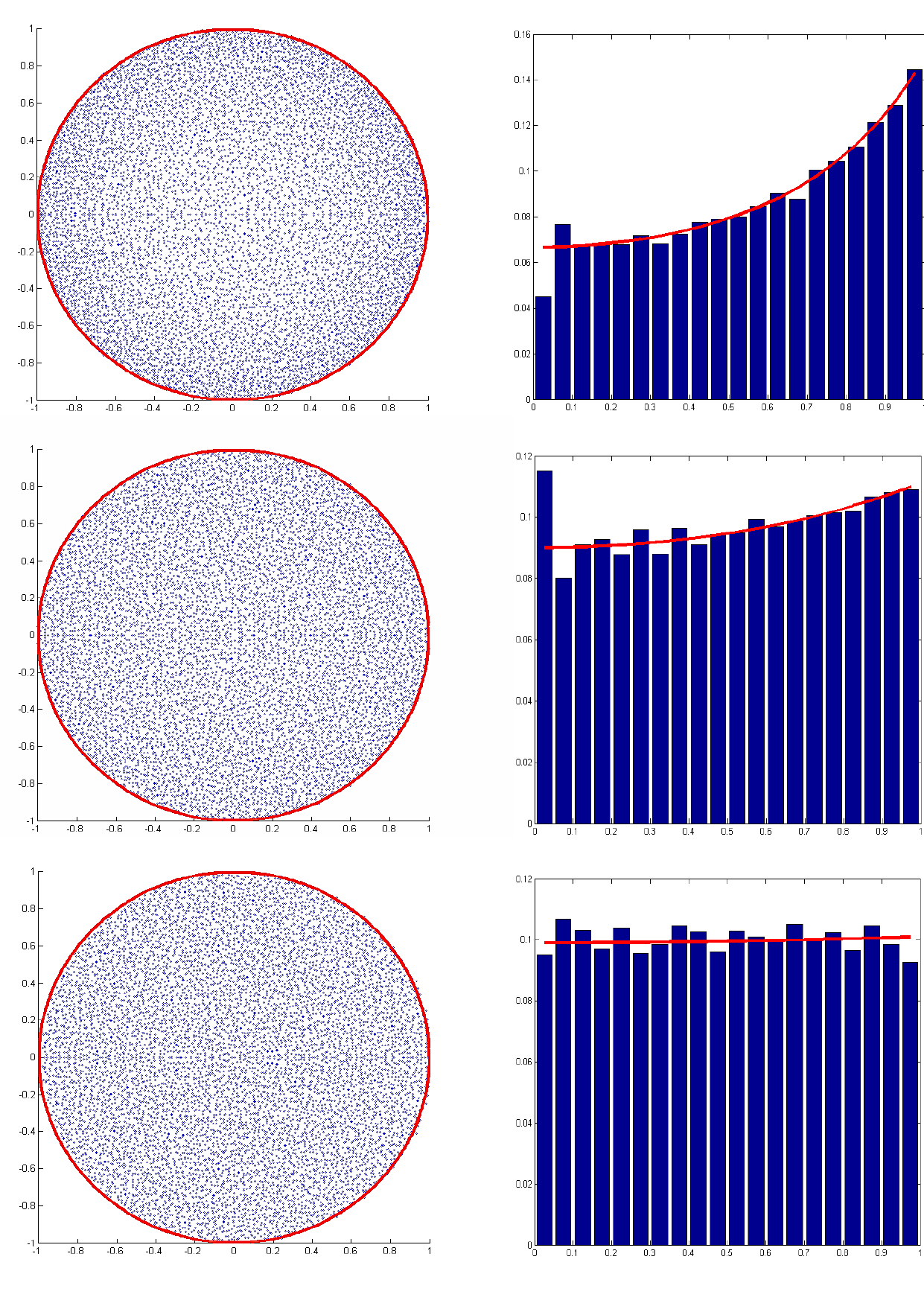}
\caption[Empirical eigenvalue distributions for simulated 
adjacency matrices]{Empirical eigenvalue distributions for simulated $8000\times 8000$ 
uniform random elements of $\mA_{n,d}$, rescaled by $\sqrt{d}$, 
for $d= 3$ (top), 10 (middle), and 100 (bottom). 
Left: scatterplots of eigenvalues, with the unit circle plotted in red for reference. 
Right: histograms for eigenvalue moduli, with each bin count normalized by $2\pi$ times the distance to the origin. The curves predicted by \eqref{kmlaw} are plotted in red.
While the eigenvalue distribution is noticeably more dense near the edge of the support for $d=3$, for $d=100$ it is indistinguishable from the uniform distribution on the disk.} \label{fig:1}

\end{figure}

The two cases of Conjecture \ref{conj:rrd} parallel known results for undirected regular graphs. 
Namely, it was shown by McKay in \cite{McKay:esd} that for $d\ge 3$ fixed, the limiting spectral distribution of adjacency matrices 
of $d$-regular undirected graphs is given by the (explicit) Kesten--McKay law.
More recently, it has been shown in \cite{DuPa}, \cite{TVW} and \cite{BKY} that if $d\rightarrow \infty$ at certain speeds, the empirical spectral distribution 
converges to the \emph{semicircular law}, matching the asymptotic behavior of Wigner matrices (the Hermitian analogue of i.i.d.\ matrices). 
Moreover, these results show that the convergence of ESDs holds on \emph{mesoscopic scales}, i.e.\ on intervals with length shrinking to zero relative to the limiting support of the spectrum, but growing to infinity relative to the mean spacing of eigenvalues. 
In particular, the work \cite{BKY} has shown convergence at the optimal mesoscopic scale, provided $d$ grows in the range $f(n)\ll d\ll \big(\frac{n}{f(n)}\big)^{2/3}$ for some $f(n)$ growing poly-logarithmically. Very recently, the Kesten--McKay law has been established at the optimal mesoscopic scale for sufficiently large fixed $d$ \cite{BHY:km}.


\subsection{Main result}	\label{sec:main_ch5}

In the present work we obtain some partial progress towards Conjecture \ref{conj:rrd} by considering random regular digraphs with random edge weights, i.e.\ matrices of the form
$$Y= A\schur X$$
where $X=(\xi_{ij})$ 
is a matrix with i.i.d.\ centered entries of unit variance. 
One can view this as 
a random regular digraph adjacency matrix with some additional randomness, or alternatively as an i.i.d.\ matrix that has been randomly sparsified to have each row and column supported on $d$ entries. This is to be compared with the work of Wood \cite{Wood:sparse} discussed above, where the sparsification is performed by i.i.d.\ Bernoulli indicators.

\begin{theorem}[Main result]		\label{thm:main}
Fix $p\in (0,1)$, and let $\xi\in\C$ be a centered random variable with unit variance and $\e|\xi|^{4+\eta}<\infty$ for some fixed $\eta>0$.
For each $n$, let $A_n$ be a uniform random element of 
$\mA_{n,d}$,
where we write $d=\lf p n\rf$, and let $X_n=(\xi_{ij})_{1\le i,j\le n}$ be a matrix of i.i.d.\ copies of $\xi$, independent of $A_n$.
Denoting $Y_n=A_n\schur X_n$, we have that the rescaled ESDs $\mu_{\frac{1}{\sqrt{np}}Y_n}$ converge weakly in probability to $\mu_{\Circ}$, the uniform measure on $B_{\C}(0,1)$.

Moreover, if we assume $\xi$ is a standard real Gaussian variable then the rescaled ESDs converge weakly almost surely. 
\end{theorem}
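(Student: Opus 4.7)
The plan is to establish the circular law via Girko's Hermitization. Setting $W_n(z) := \frac{1}{\sqrt{np}}Y_n - zI$, the logarithmic potential of $\mu_{(np)^{-1/2}Y_n}$ equals $-\int_0^\infty \log s \, d\nu_{z,n}(s)$, where $\nu_{z,n}$ is the empirical singular value distribution of $W_n(z)$. By now-standard arguments (cf.\ \cite{TaVu:esd,BoCh:survey}), the conclusion of \Cref{thm:main} reduces to two tasks for Lebesgue-a.e.\ $z\in \C$: (A) weak convergence in probability of $\nu_{z,n}$ to the limiting singular value distribution $\nu_z$ of the iid case (so that Girko's formula recovers $\muc$); and (B) uniform integrability of $\log s$ under $\nu_{z,n}$, which follows from a polynomial lower bound $\pro{s_n(W_n(z))\leq n^{-B}}\to 0$ together with the Frobenius bound $\|W_n(z)\|_{\HS}^2 = O(n)$ (immediate from the $4+\eta$-moment hypothesis on $\xi$).

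For (A) the approach is the Stieltjes transform method applied to the Hermitization
\[
H_n(z)=\begin{pmatrix} 0 & W_n(z) \\ W_n(z)^* & 0 \end{pmatrix}.
\]
One shows that $m_{z,n}(w) := \frac{1}{2n}\tr(H_n(z)-wI)^{-1}$ concentrates around its mean and that this mean satisfies asymptotically the same fixed-point equation as in the iid sparse case of \cite{Wood:sparse}. Concentration in $A_n$ follows from a switching-based bounded-differences estimate: a single simple swap in the rrd alters each singular value by $O(n^{-1/2})$. Concentration in $X_n$ (conditional on $A_n$) is obtained via a standard martingale argument using the $4+\eta$-moment assumption to truncate heavier tails. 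The self-consistent equation for $\e\, m_{z,n}(w)$ agrees with the iid sparse case because the doubly-stochastic constraint perturbs the leading diagrams in the expansion of $\e\,\tr(H_n(z)-wI)^{-k}$ only by $O(1/n)$-corrections.

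The main obstacle is (B), the polynomial lower bound on $s_n(W_n(z))$. The plan is to adapt the Rudelson--Vershynin geometric framework, decomposing $\mathbb{S}^{n-1}=\Comp\cup\Incomp$, where $\Comp$ consists of unit vectors close in $\ell_2$ to sparse ones. On $\Comp$, a union bound over an $\eps$-net of size $\exp(\rho n)$, combined with a large-deviation estimate $\pro{\|W_n(z)v\|_2 \leq \delta\sqrt{n}} \leq \exp(-cn)$ for each fixed $v$, yields the bound; the latter estimate exploits the $d$ iid nonzero entries in each row of $Y_n$ (conditional on $A_n$) together with the expansion properties of the rrd adjacency matrix. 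On $\Incomp$, one invokes the distance-to-hyperplane inequality
\[
s_n(W_n(z))\geq \frac{1}{\sqrt{n}}\min_{j\in [n]} \dist\bigl(\mathrm{col}_j\,W_n(z),\,H_j\bigr),
\]
where $H_j$ is the span of the other columns. Conditioning on $A_n$ and on $H_j$, the $j$th column of $Y_n$ has $d$ iid weighted entries at positions prescribed by the $j$th column of $A_n$, so its inner product with the unit normal to $H_j$ is small-ball anti-concentrated provided that normal is not too concentrated on the $A_n$-support of the column. Establishing the required structure theorem ruling out concentrated approximate null vectors of $W_n(z)^*$ is the technical heart of the argument, and relies on both the mixing/expansion properties of the rrd ensemble and the delocalizing effect of the random weights $X_n$.

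For the almost sure convergence in the standard real Gaussian case, one promotes (A) to exponential concentration: the Gaussian log-Sobolev inequality applied to $X_n$ (conditional on $A_n$) yields $\pro{|\int f\,d\nu_{z,n}-\e\int f\,d\nu_{z,n}|>\eps \mid A_n} \leq \exp(-c\eps^2 n)$ for Lipschitz $f$, while a switching-based Azuma inequality handles the fluctuations in $A_n$. Summability of the tail bounds, together with Borel--Cantelli, upgrades convergence in probability to almost sure convergence.
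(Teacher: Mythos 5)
The reduction to singular-value estimates via Girko's Hermitization is the right starting point, but there is a genuine gap in what you take to suffice for step (B). You claim that uniform integrability of $\log s$ under $\nu_{n,z}$ near the origin ``follows from a polynomial lower bound $\pro{s_n(W_n(z))\le n^{-B}}\to 0$ together with the Frobenius bound.'' This is false: a lower bound on the \emph{smallest} singular value tells you only that no singular value lies below $n^{-B}$, but it says nothing about how many singular values lie in, say, $[n^{-B}, \delta]$. The Frobenius bound controls the singularity of $\log$ at $+\infty$, not at $0$. To prevent a positive proportion of singular values from piling up at scale $\ll 1$, one needs a local, quantitative lower bound on the intermediate singular values $s_{n-k}$, roughly $s_{n-k}\gtrsim k/n$ for $n^\alpha \lesssim k\lesssim n$. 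This is exactly the Wegner-type estimate (\Cref{prop:small} and its source \Cref{thm:broad_small}) that the paper develops, following the inverse-second-moment/distance scheme of Tao--Vu, and it is the technical heart of the argument — not just the smallest singular value. Without it, the truncated log-potential $\int_0^\delta |\log s|\,d\nu_{n,z}$ cannot be shown to vanish uniformly as $\delta\to 0$, and the Hermitization reduction is incomplete. The distance-to-hyperplane estimate you sketch for $s_n$ needs to be upgraded to a simultaneous, high-probability lower bound on $\dist(R_i, R_{-i})$ after removing $O(k)$ rows, with the Talagrand/incompressibility machinery to ensure the residual subspace is in generic position.

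On step (A), your proposal is not incorrect but takes a substantially harder route than the paper. You propose a direct Stieltjes-transform analysis of $H_n(z)$ with a diagrammatic expansion showing the doubly-stochastic constraint only perturbs leading diagrams by $O(1/n)$, plus a switching-based concentration argument in $A_n$. The claim about the constrained expansion is plausible but entirely unsubstantiated and would require a full combinatorial analysis of moments under the permanental (row/column-sum) constraint. The paper sidesteps all of this: it uses the Canfield--McKay asymptotic to lower-bound $\pr(B\in\mM_n(d))\ge \exp(-O(n\log n))$ for an iid Bernoulli matrix $B$, and the Guionnet--Zeitouni concentration of linear spectral statistics at scale $\exp(-cn^2)$, so that conditioning the iid model on $d$-regularity costs nothing against the concentration. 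This reduces the constrained ESD question to the unconstrained iid one (handled by Dozier--Silverstein) with no diagram analysis whatsoever. Similarly, for the almost-sure Gaussian case, the paper does not need the log-Sobolev/Azuma machinery you invoke: the Guionnet--Zeitouni bound is already $\exp(-cn^2)$, and the upgrade to almost-sure convergence comes purely from better tail bounds on the operator norm and the smallest singular value in the Gaussian case.
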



\begin{remark}[Moment assumption on $\xi$]		\label{rmk:mom}
The assumption $\e |\xi|^{4+\eta}<\infty$ is used to ensure that $A_n\circ X_n$ has operator norm of size $O(\sqrt{n})$ with high probability, which is needed in our arguments to obtain control on small singular values. However, we believe it is likely this hypothesis can be eliminated. 
\end{remark}

In a sequel to this paper \cite{Cook:rrdcirc} we establish the circular law for the unsigned adjacency matrix $A_n$ under the mild growth condition $\min(d,n-d)\ge \log^Cn$ for a suitable constant $C>0$, which establishes part (1) of \Cref{conj:rrd} for this range of $d$. That result requires significant new ideas over the proof of \Cref{thm:main} due both to the sparsity of $A_n$ and the lack of the extra randomness of $X_n$. The work \cite{Cook:rrdcirc} builds on some of the basic arguments of the present paper, which can serve as a warmup for the reader interested in understanding the proofs. Below, we provide some comparison between \cite{Cook:rrdcirc} and the present work after first giving some background on the general strategy. We also mention that in the recent work \cite{BCZ:perm} we have proved the circular law for the sum of $d$ i.i.d.\ uniform random $n\times n$ permutation matrices (under a growth condition on $d=d(n)$) which provides a model for random $d$-regular directed multigraphs.

Let us denote $\mu_n:=\mu_{\frac{1}{\sqrt{np}}Y_n}$.
Our general approach to the proof of \Cref{thm:main} follows previous works in using Girko's Hermitization strategy, which reduces the problem of establishing convergence of linear statistics $\int_\C f d\mu_n$ to one of establishing convergence of the \emph{logarithmic potentials}
$
\int_{\R^+} \log(s) d\nu_{n,z}
$
for a.e.\ $z\in \C$, where
\begin{equation}	\label{nunz}
\nu_{n,z}:=\mu_{\sqrt{(\frac{1}{\sqrt{np}}Y_n-z)^*(\frac{1}{\sqrt{np}}Y_n-z)}}
\end{equation}
is the empirical singular value distribution of the scalar-shifted matrix $\frac{1}{\sqrt{np}}Y_n-z$.
Following the influential work of Bai \cite{Bai97}, this in turn can be reduced to the following three tasks:
\begin{enumerate}
\item (Weak convergence of singular value distributions). Show that for a.e.\ $z\in \C$, the measures $\nu_{n,z}$ converge weakly in probability.	\label{step1}
\item (Smallest singular value). Show that for a.e.\ $z\in \C$, with probability $1-o(1)$ the smallest singular value of $\frac{1}{\sqrt{np}}Y_n-z$ is of order at least $e^{-o(n)}$.	\label{step2}
\item (Wegner-type estimate). Show that for a.e.\ $z\in \C$ and some $a, c>0$ depending on $z$ (but not on $n$), with probability $1-o(1)$, for all $\eta\in (n^{-c},a]$ we have $\nu_{n,z}([0,\eta]) = O(\eta)$.	\label{step3}
\end{enumerate}
In the above, the implied constants and rates of convergence may all depend on the choice of $z$.

A key difficulty for 
all of these problems is the lack of independence among the entries of $A_n$, and to deal with this we combine several results from the literature, as well as a new Wegner-type estimate.
For 
(\ref{step1}) we use 
a conditioning argument of Tran, Vu and Wang \cite{TVW} to compare with a matrix $\widetilde{Y}_n=B_n\circ X_n$, where $B_n$ is an i.i.d.\ Bernoulli matrix of the same expected density as $A_n$. Here we apply an asymptotic for the number of 0--1 matrices with constrained row and column sums due to Canfield and McKay \cite{CaMc}.
For 
(\ref{step2}) and (\ref{step3}) 
we leverage the independence of the entries of $X_n$ together with robust connectivity properties of random regular digraphs.
Specifically, we use a graph discrepancy result of the author from \cite{Cook:discrep} to prove the 
matrix $A_n$ satisfies a certain ``broad connectivity" condition with high probability.
This gives us access to recent results on the invertibility properties of random matrices with independent but non-identically distributed entries having broadly-connected support, due to Rudelson--Zeitouni \cite{RuZe} (for the Gaussian case) and the author \cite{Cook:ssv} (for the general case). 
In the process we prove \Cref{thm:broad_small}, which gives a Wegner-type bound at essentially optimal scale 
(i.e.\ allowing one to take any $c\in (0,1)$ in (\ref{step3}))
for the small singular values of random matrices with broadly-connected support.

In the sequel \cite{Cook:rrdcirc} to this paper, the proof of the circular law for the unsigned, possibly sparse adjacency matrices $A_n$ follows the same basic steps as in (\ref{step1})--(\ref{step3}) above, but each of these tasks is significantly more involved.
In this setting we are no longer able to apply the recent results of \cite{RuZe,Cook:ssv} for (\ref{step2}) and (\ref{step3}) as these only apply to matrices of the form $A_n\circ X_n$ with $X_n$ an i.i.d.\ matrix and $A_n$ a matrix independent of $X_n$ and having non-vanishing density. The bulk of \cite{Cook:rrdcirc} is spent establishing (\ref{step2}), building on previous works on the invertibility problem \cite{Cook:sing,LLTTY}, and making use of finer graph expansion properties than the above-mentioned broad connectivity property. For (\ref{step1}) and (\ref{step3}) we apply the conditioning argument of \cite{TVW} as in the present work to replace the matrix $A_n$ with an i.i.d.\ Bernoulli matrix of the same density. Unlike in the present work, we then need an extra step of replacing the (possibly sparse) Bernoulli matrix with a (dense) Gaussian matrix (using a Lindeberg swapping argument) at which point standard arguments apply. See \cite{Cook:rrdcirc} for further details.

\subsection{Outline} 	
The rest of the paper is organized as follows.
In \Cref{sec:highlevel_ch5} we give a high level proof of \Cref{thm:main} following the Hermitization approach, reducing our task to establishing the weak convergence of empirical singular value distributions \eqref{nunz}, and lower bounds on small singular values of $\frac1{\sqrt{np}}Y_n-z$.
The weak convergence of $\nu_{n,z}$ is proved in \Cref{sec:svconv}.
In \Cref{sec:redux}, we present two results -- \Cref{prop:broad} and \Cref{thm:broad_small} -- on the small singular values of random matrices with broadly-connected support, and prove that random regular digraphs satisfy this condition with high probability.
In \Cref{sec:wegner} we prove \Cref{thm:broad_small}.

\subsection{Notation}	\label{sec:notation}

We use standard asymptotic notation, always with respect to the limit $n\rightarrow\infty$. 
$f=O(g)$, $f\ll g$ and $g\gg f$ are all synonymous to the statement that $|f|\le Cg$ for some absolute constant $C>0$. 
$f=o(g)$ 
means $f/g\rightarrow 0$ as $n\rightarrow \infty$.
Dependence of implied constants on parameters is indicated with subscripts. 
$C,C', c, c_0$, etc.\ denote constants which may change from line to line. 

For $J\subset [n]$, we denote by $\C^J\subset\C^n$ (resp. $S^J\subset S^{n-1}$) the set of vectors (resp. unit vectors) in $\C^n$ supported on $J$.
Given a vector $v\in \C^n$, we denote by $v_J\in \C^n$ the projection of $v$ to the coordinate subspace $\C^J$.
${[m]\choose x}$ denotes the family of subsets of $[m]$ of size $\lf x\rf$. We write $E_\rho$ for the closed Euclidean $\rho$-neighborhood of a set $E\subset \C^n$.

$\mM_{n,m}(\C)$ denotes the set of $n\times m$ matrices with complex entries, and for square matrices we simply write $\mM_n(\C)= \mM_{n,n}(\C)$. 
For a matrix $A=(a_{ij})\in \mM_{n,m}(\C)$ we will sometimes use the notation $A(i,j)= a_{ij}$.
For $I\subset[n],J\subset[m]$ with elements $i_1<\cdots<i_{|I|}$ and $j_1<\cdots j_{|J|}$, respectively, $A_{I,J}$ denotes the $|I|\times |J|$ submatrix $(a_{i_k,j_l})_{1\le k\le |I|, 1\le l\le |J|}$.
We often abbreviate $A_{J}:= A_{J,J}$.
$\|\cdot \|$ denotes the $\ell^2_m\rightarrow\ell^2_n$ operator norm when applied to $n\times m$ matrices, and the Euclidean norm when applied to vectors.
Other norms are indicated with subscripts. 

For a measure $\mu$ on a space $\mX$ and a function $f:\mX\to \C$, we will sometimes abbreviate $\mu(f):= \int_{\mX} fd \mu$.

When there can be no confusion we will suppress the subscript $n$ from the matrices $X_n, A_n, Y_n$ etc.


\section{Reduction to estimates on singular values}	\label{sec:highlevel_ch5}

We follow Girko's \emph{Hermitization method}, introduced in \cite{Girko84}, and which we briefly review below; see \cite{TaVu:esd} or the survey \cite{BoCh:survey} for more background.
For a probability measure $\mu$ on $\C$ we define the \emph{log-potential}
\begin{equation}	\label{deflogpot}
U_\mu(z) :=\int_{\C}\log|w-z|d\mu(w).
\end{equation}
Let us abbreviate $\mu_n:=\mu_{\frac{1}{\sqrt{np}}Y_n}$. 
The following is taken from \cite[Theorem 1.20]{TaVu:esd}:

\begin{lemma}[Hermitization]		\label{lem:herm}
Let $\mu$ be a probability measure on $\C$ satisfying $\int_{\C} |z|^2d\mu(z)<\infty$.
The following are equivalent:
\begin{enumerate}[(i)]
\item $\mu_n$ converges weakly in probability to $\mu$.
\item For almost every $z\in \C$, $U_{\mu_n}(z)$ converges in probability to $U_\mu(z)$.
\end{enumerate}
Moreover, the same equivalence holds with ``in probability" replaced by ``almost surely". 
\end{lemma}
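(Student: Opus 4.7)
The plan is to exploit the distributional identity $\tfrac{1}{2\pi}\Delta_w\log|w-z| = \delta_z(w)$ on $\R^2\simeq\C$, which via Green's identity translates weak-convergence questions about $\mu_n$ into pointwise-convergence questions about the logarithmic potential $U_{\mu_n}$. For any $f\in C_c^\infty(\C)$, Fubini and the fundamental-solution property give
\[
\int_\C f\, d\mu_n \;=\; \frac{1}{2\pi}\int_\C (\Delta f)(z)\, U_{\mu_n}(z)\, dz,
\]
and the analogous identity for $\mu$; the lemma then reduces to passing to the limit under these integrals. Throughout I would implicitly use a uniform second-moment bound on $\mu_n$, which in the applications of interest comes from an operator-norm bound on $\frac{1}{\sqrt{np}}Y_n$ of size $O(1)$ with high probability and ensures tightness of the family $\{\mu_n\}_n$.

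For (ii)$\Rightarrow$(i) I would fix $f\in C_c^\infty(\C)$ and show $\int_K \e\,|U_{\mu_n}(z) - U_\mu(z)|\,dz\to 0$, where $K=\supp\Delta f$. For each $z\in K$, Vitali's convergence theorem applied to the hypothesized convergence $U_{\mu_n}(z)\to U_\mu(z)$ in probability reduces the inner limit to verifying uniform $\pr$-integrability of $\{U_{\mu_n}(z)\}_n$; this in turn follows from Jensen, Fubini, local $L^{1+\delta}$-integrability of $w\mapsto\log|w-z|$, and the uniform second-moment bound. Dominated convergence in $z$ then yields the displayed limit, hence $\int f\,d\mu_n\to \int f\,d\mu$ in probability, which extends from $C_c^\infty(\C)$ to all of $C_b(\C)$ by density in $C_c$ together with tightness.

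For (i)$\Rightarrow$(ii), for each $z\in\C$ I would decompose
\[
U_{\mu_n}(z) = \int_{|w-z|\le\eta}\log|w-z|\,d\mu_n(w) + \int_{\eta<|w-z|\le R}\log|w-z|\,d\mu_n(w) + \int_{|w-z|>R}\log|w-z|\,d\mu_n(w).
\]
The middle term has a bounded continuous integrand and so converges in probability by (i); the far-field term is controlled, as $R\to\infty$ uniformly in $n$, by the uniform second-moment bound. The main obstacle -- and the reason convergence holds only for a.e.\ $z$ -- is the singular term. Here I would use the Fubini bound
\[
\int_\C \phi(z)\,\e\!\!\int_{|w-z|\le\eta}\!\!|\log|w-z||\,d\mu_n(w)\,dz \;\lesssim\; \|\phi\|_\infty\,\eta^2|\log\eta|
\]
for any smooth bounded cutoff $\phi$, following from $\int_{|u|\le\eta}|\log|u||\,du\lesssim \eta^2|\log\eta|$ on $\R^2$ and the fact that $\e\mu_n$ is a probability measure. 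Markov's inequality and a diagonal extraction over a countable sequence $\eta_k\to 0$ give convergence of the singular term to $0$ in probability for Lebesgue-a.e.\ $z$, completing (ii). The almost-sure version in both directions is obtained by running these arguments along a countable dense family of test functions $f$ (together with countable sequences $\eta_k\to 0$, $R_k\to\infty$), extracting a common subsequence, and intersecting the resulting full-probability events.
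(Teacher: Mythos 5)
The paper does not prove \Cref{lem:herm}; it is quoted verbatim from \cite[Theorem 1.20]{TaVu:esd}, so there is no internal argument to compare against. Your overall strategy---representing linear statistics via the distributional identity $\frac{1}{2\pi}\Delta_w\log|w-z|=\delta_z(w)$ and Green's identity---is the standard route and is in the spirit of how Tao and Vu proceed.

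There is, however, a genuine gap in your (ii)$\Rightarrow$(i) argument, which is the direction the paper actually uses. You reduce to uniform $\pr$-integrability of $\{U_{\mu_n}(z)\}_n$ for fixed $z$ and assert this follows from Jensen, Fubini, local $L^{1+\delta}$-integrability of $w\mapsto\log|w-z|$, and the second-moment bound. After Jensen and Fubini you are left with $\int_\C |\log|w-z||^{1+\delta}\,d(\e\mu_n)(w)$. The far-field $|w-z|\ge 1$ is indeed controlled by the second moment, but the near-field is integrated against the measure $\e\mu_n$, not Lebesgue measure, so ``local $L^{1+\delta}$-integrability of the log kernel'' (a Lebesgue statement) gives nothing: if $\e\mu_n$ charges a small neighborhood of $z$ heavily, the near-field integral is large or infinite for every $\delta>0$. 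More fundamentally, convergence in probability of $U_{\mu_n}(z)$ to a finite limit does \emph{not} imply uniform integrability: $U_{\mu_n}(z)$ could be of order $-n$ on an event of probability $1/\log n$, which is compatible with (ii) but not uniformly integrable, and nothing in the hypotheses forbids this. In the intended application the near-field control is exactly the content of the small-singular-value estimates (Propositions 2.4 and 2.5 of the paper), and the Hermitization lemma is deliberately phrased so that this anti-concentration input is absorbed into the verification of hypothesis (ii), not derived inside the lemma. A correct proof of (ii)$\Rightarrow$(i) works with integrated-in-$z$ quantities---essentially the Fubini trick you deploy in the other direction, combined with a two-sided truncation of the log kernel and Markov applied to $\int|\Delta f|\,|U_{\mu_n}-U_\mu|\,dz$ as a whole---rather than with pointwise-in-$z$ uniform integrability.

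A smaller issue in your (i)$\Rightarrow$(ii): from $\int\phi(z)\,g_{n,\eta}(z)\,dz\lesssim\eta^2|\log\eta|$ with $g_{n,\eta}(z):=\e\int_{|w-z|\le\eta}|\log|w-z||\,d\mu_n(w)$, Markov bounds the Lebesgue measure of $\{z:g_{n,\eta}(z)>t\}$ uniformly in $n$, but this exceptional set may move with $n$, so ``Lebesgue-a.e.\ $z$'' does not follow from a countable extraction in $\eta$ alone. One needs, e.g., to sum the bound over $n$ along a rapidly decreasing sequence $\eta_n$ and invoke Borel--Cantelli/Fatou in the $z$-variable; as written the step is unjustified.
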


Denoting the ordered singular values of a matrix $M\in \mM_n(\C)$ by $s_1(M)\ge\cdots\ge s_n(M)$, from the well-known identity 
$$\prod_{i=1}^n|\lambda_i(M)| = |\det(M)| = \prod_{i=1}^n s_i(M)$$
we have
\begin{align*}
U_{\mu_n}(z) &= \frac{1}{n}\log\left|\det\left(\frac{1}{\sqrt{np}}Y-zI\right)\right| \\
&= \frac{1}{n} \sum_{i=1}^n \log s_i\left(\frac{1}{\sqrt{np}} Y-zI\right)\\
&=\int_{\R_+} \log(s) d\nu_{n,z}(s)
\end{align*}
where we have defined
\begin{equation}
\nu_{n,z}:= \frac1n\sum_{i=1}^n \delta_{s_i\left(\frac{1}{\sqrt{np}}Y-zI\right)}.
\end{equation}
The log-potential for the uniform measure on the unit disk is 
\begin{equation}	\label{circlogpot}
U(z):=\frac{1}{\pi}\int_{B_{\C}(0,1)} \log|w-z|dw= 
\begin{cases}
\;\log |z| & \text{if } |z|>1\\
-\frac12(1-|z|^2) & \text{otherwise}.
\end{cases}
\end{equation}
From the above lines and \Cref{lem:herm}, to establish \Cref{thm:main} it suffices to show that for almost every $z\in \C$, $\int_{\R_+}\log(s)d\nu_{n,z}(s)$ converges in probability to $U(z)$.

As a first step we will establish the following:
\begin{proposition}[Weak convergence of $\nu_{n,z}$]	\label{prop:sing}
For all $z\in \C$, $\nu_{n,z}$ converges vaguely almost surely to a deterministic probability measure $\nu_z$ on $\R_+$.
Furthermore, for all $z\in \C$ the measures $\nu_z$ satisfy 
\begin{equation}	\label{nuzlogp}
\int_{\R_+}\log(s)d\nu_z(s) = U(z)
\end{equation}
where $U(z)$ was defined in \eqref{circlogpot}.
\end{proposition}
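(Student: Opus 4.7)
The plan is to reduce to the iid case via a Tran--Vu--Wang conditioning argument. Introduce the comparison matrix $\wY_n := B_n \schur X_n$, where $B_n$ has iid Bernoulli$(p)$ entries independent of $X_n$; then $\wY_n$ has iid entries of mean zero and variance $p$, so that $\frac{1}{\sqrt{np}} \wY_n$ is a standard normalized iid matrix with finite $(4+\eta)$-th moment. Denote by $\widetilde{\nu}_{n,z}$ the empirical singular value distribution of $\frac{1}{\sqrt{np}} \wY_n - z\I$. By the iid analysis of Bai (extended by Tao--Vu) underlying the proof of \Cref{thm:circ}, $\widetilde{\nu}_{n,z}$ converges vaguely almost surely to a deterministic probability measure $\nu_z$ on $\R_+$ satisfying $\int \log(s)\, d\nu_z(s) = U(z)$; this log-potential identity is precisely what the Hermitization machinery produces en route to the iid circular law, and is where the $(4+\eta)$-th moment assumption enters to control small singular values in the iid setting.

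To transfer the convergence to $\nu_{n,z}$, I would use that $A_n$ has the distribution of $B_n$ conditioned on the event $B_n \in \mM_n(d)$. Hence for any $f \in C_c(\R_+)$ and $\eps > 0$,
\[
\pro{|\nu_{n,z}(f) - \nu_z(f)| > \eps} \,\le\, \frac{\pro{|\widetilde{\nu}_{n,z}(f) - \nu_z(f)| > \eps}}{\pro{B_n \in \mM_n(d)}}.
\]
The Canfield--McKay enumeration of $0$--$1$ matrices with uniform margins~\cite{CaMc} gives the lower bound $\pro{B_n \in \mM_n(d)} \ge n^{-Cn}$ for some constant $C = C(p) > 0$. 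Vague convergence in probability of $\nu_{n,z}$ to $\nu_z$ follows provided the numerator decays faster than this denominator, and the almost sure conclusion follows by a Borel--Cantelli argument along subsequences.

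The main obstacle is therefore to establish \emph{super-polynomial} concentration of $\widetilde{\nu}_{n,z}(f)$ around $\nu_z(f)$ at a rate beating the Canfield--McKay cost $n^{-Cn}$. A direct application of McDiarmid's bounded-differences inequality is manifestly too weak: a rank-one entry perturbation can shift a singular value by $O(1)$, so that $\widetilde{\nu}_{n,z}(f)$ is only $O(1/n)$-Lipschitz in each of its $n^2$ entries and hence only $O(1)$-concentrated. To circumvent this one works at the level of the Stieltjes transform $s_{n,z}(\eta) := \frac{1}{2n} \tr (\mH_{n,z} - i\eta)^{-1}$ of the Hermitization $\mH_{n,z}$ of $\frac{1}{\sqrt{np}} \wY_n - z\I$, where Schur-complement and resolvent identities, combined with concentration for quadratic forms in the iid entries of $\wY_n$, deliver much sharper concentration at each fixed $\eta > 0$; convergence of $s_{n,z}(\eta)$ on a dense set of $\eta$ in the upper half-plane translates to vague convergence of the measures via Stieltjes inversion. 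In the Gaussian case the log-Sobolev inequality for the joint entry law provides concentration strong enough to yield the almost sure conclusion directly, while for general $\xi$ with only $(4+\eta)$-th moment the Borel--Cantelli step requires extra care (e.g.\ truncation of the entries of $X_n$ at a threshold $K_n \to \infty$ slowly enough to preserve the variance while bounding the Lipschitz constants).
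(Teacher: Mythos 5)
Your overall architecture matches the paper's proof closely: introduce $\widetilde{Y}_n = B_n\circ X_n$ with $B_n$ iid Bernoulli$(p)$, use the known iid singular-value limit law and its log-potential identity (the paper cites Dozier--Silverstein and Pan--Zhou rather than Bai/Tao--Vu, but the substance is the same), condition via $A_n\eqd B_n\,|\,\{B_n\in\mM_n(d)\}$, and pay the Canfield--McKay cost $\pro{B_n\in\mM_n(d)}\ge \exp(-O_p(n\log n))$. You also correctly diagnose that entry-level McDiarmid on $\widetilde{\nu}_{n,z}(f)$ is hopeless. The paper's reduction is exactly this up to this point.

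The genuine gap is the concentration rate. Your proposed Stieltjes-transform route (row-by-row martingale differences combined with Schur complement/Hanson--Wright for quadratic forms) caps out at $\exp(-cn)$: a rank-one row perturbation shifts $\frac{1}{2n}\tr(\mathcal{H}_{n,z}-i\eta)^{-1}$ by $O((n\eta)^{-1})$, so Azuma over $n$ rows gives $\exp(-c\eps^2\eta^2 n)$, and Hanson--Wright for the relevant quadratic forms is of the same order. But $\exp(-cn)$ does \emph{not} beat the conditioning cost $\exp(Cn\log n)$; it is off by a factor of $\log n$ in the exponent, so the argument fails at precisely the step it was meant to resolve. The paper instead applies the Guionnet--Zeitouni concentration inequality (\Cref{lem:guze}) directly to the linear statistic $\mu_{H(z)}(f_\eps)$ of the Hermitization $H(z)$: for a Lipschitz compactly supported $f_\eps$, the map $H\mapsto \mu_{H+H_0}(f_\eps)$ is a \emph{convex}, $O(n^{-1/2})$-Lipschitz function (in Hilbert--Schmidt norm) of the independent entries, each of which is itself $O(n^{-1/2})$-bounded. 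Talagrand's concentration for convex Lipschitz functions then compounds these two $n^{-1/2}$ factors, yielding the probability bound $\exp(-c_0pn^2)$, which comfortably dominates $\exp(Cn\log n)$. This double-$n^{-1/2}$ gain, made available by working with the linear statistic itself rather than with pointwise Stieltjes transforms, is the missing idea. (A further minor point: the Gaussian case needs no separate treatment here, since the same $\exp(-c_0pn^2)$ estimate is already summable and suffices for Borel--Cantelli; the log-Sobolev alternative you mention is unnecessary for \Cref{prop:sing}, though a Gaussian-specific argument does reappear for the smallest singular value estimates elsewhere in the paper.)
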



In order to deduce from this the convergence of log-potentials $U_{\mu_n}(z)$, we will apply the above proposition to a truncation $f\in C_c(\R_+)$ of the function $s\mapsto \log(s)$. 
To show that the truncated integral $\int_{\R_+} fd\nu_{n,z}$ is a good approximation of $U_{\mu_n}(z)$, we must prove that the measures $\nu_{n,z}$ uniformly integrate the singularities of $s\mapsto \log(s)$.

The singularity at $+\infty$ requires control on a matrix norm of $\frac{1}{\sqrt{np}}Y-zI$. While the standard approach is to use the Hilbert--Schmidt norm, we will instead apply the following stronger control on the operator norm as we will need it later. 

\begin{lemma}[The largest singular value]	\label{lem:opnorm}
Except with probability $O(n^{-\eta/8})$ we have $s_1(\frac{1}{\sqrt{np}}Y-zI)=O_{z,p}(1)$.
If $\xi$ is a standard Gaussian then the probability bound improves to $O(e^{-cn})$ for some constant $c>0$.
\end{lemma}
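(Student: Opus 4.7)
The plan starts with the triangle inequality
\[
s_1\Bigl(\tfrac{1}{\sqrt{np}}Y - zI\Bigr) \le \tfrac{1}{\sqrt{np}}\|Y\| + |z|,
\]
which reduces the lemma to establishing $\|Y\| = \|A\circ X\| = O_p(\sqrt{np})$ with the stated probability (the $|z|$ term is a constant absorbed into the implicit constant). The main subtlety is that the entries of $X$ are unbounded while $A$ introduces a nontrivial dependence structure, so neither standard iid operator-norm bounds nor direct concentration for matrices with bounded independent entries apply off the shelf.

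For the general case I would decompose $X$ via truncation at level $K:=\sqrt{n}/\log n$. Writing $\xi_{ij}=\tilde\xi_{ij}+\xi_{ij}\mathbf{1}_{|\xi_{ij}|>K}+c_n$ with $c_n:=\e[\xi\mathbf{1}_{|\xi|\le K}]$ and $\tilde\xi_{ij}:=\xi_{ij}\mathbf{1}_{|\xi_{ij}|\le K}-c_n$, we correspondingly decompose $Y=\tilde Y+Y^{>}+c_n A$. For the tail, Hilbert--Schmidt combined with the moment hypothesis gives $\e\|Y^{>}\|_{\HS}^2\le nd\cdot K^{-2-\eta}\,\e|\xi|^{4+\eta}$, so Markov with threshold $n^{\eta/8}$ yields $\|Y^{>}\|=o(\sqrt{np})$ with probability at least $1-O(n^{-\eta/8})$. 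The deterministic correction is negligible since $\|c_n A\|\le |c_n|\,d$ while $|c_n|\le K^{-3-\eta}\,\e|\xi|^{4+\eta}$.

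For the main piece $\tilde Y=A\circ\tilde X$ I would condition on $A$: its entries are then independent, centered, bounded by $2K$, and have variances at most $1$, supported on the $d$ nonzero positions in each row and column. Applying the sharp operator-norm bound of Bandeira--van Handel yields
\[
\e\bigl[\|\tilde Y\|\,\big|\,A\bigr] \lesssim \sigma_{*}+\sigma_{**}\sqrt{\log n},
\]
where $\sigma_{*}=\sqrt{d}$ (using $\sum_j a_{ij}=\sum_i a_{ij}=d$) and $\sigma_{**}=2K$; the choice $K=\sqrt{n}/\log n$ ensures $\sigma_{**}\sqrt{\log n}=2\sqrt{n/\log n}=o(\sqrt d)$, so $\e[\|\tilde Y\|\mid A]=O(\sqrt d)$. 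Concentration around this expectation is furnished by Talagrand's inequality for convex $1$-Lipschitz functions of bounded independent variables, applied to $\tilde X\mapsto\|A\circ\tilde X\|$: one obtains $\|\tilde Y\|\le 2\,\e[\|\tilde Y\|\mid A]=O(\sqrt d)$ with conditional probability at least $1-\exp(-c\log^2 n)$. Combining the three estimates gives $\|Y\|=O_p(\sqrt{np})$ with probability at least $1-O(n^{-\eta/8})$.

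The Gaussian case requires no truncation: $(\xi_{ij})\mapsto\|A\circ X\|$ is $1$-Lipschitz in Frobenius norm, so Gaussian concentration gives $\pr(|\|Y\|-\e\|Y\||>t)\le 2e^{-t^2/2}$. A Latala-type fourth-moment bound applied conditionally on $A$ yields $\e\|Y\|\le C\sqrt{np}$, and taking $t\asymp\sqrt{np}\asymp\sqrt{n}$ produces the exponential bound $\exp(-cn)$ since $p$ is fixed. The main obstacle in the general case is obtaining the sharp bound $\e[\|\tilde Y\|\mid A]=O(\sqrt d)$ \emph{without} a spurious $\sqrt{\log n}$ factor; this forces one to use Bandeira--van Handel (matrix Bernstein would lose a $\sqrt{\log n}$) together with a truncation level $K$ carefully chosen to balance the tail contribution from $Y^{>}$---controlled via the $(4+\eta)$-moment hypothesis---against the requirement $K\sqrt{\log n}=o(\sqrt d)$.
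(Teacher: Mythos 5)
Your proof is correct and follows essentially the same strategy the paper invokes through its citations (Lata\l a's norm bound plus a standard truncation argument, per \cite[Lemma~5.5]{Cook:ssv}; Gaussian concentration for the exponential bound): truncate at a scale chosen to balance tail cost against concentration cost, establish a log-free $O(\sqrt d)$ conditional expected-norm bound given $A$, upgrade to high probability via Talagrand, and dispose of the tail via a Hilbert--Schmidt/Markov estimate using the $(4+\eta)$-th moment. The only cosmetic difference is your use of Bandeira--van Handel in place of Lata\l a for the bulk piece, which serves the identical purpose of avoiding the $\sqrt{\log n}$ loss that a Bernstein-type bound would incur.
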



\begin{proof}
This is a consequence of a general bound on the expected operator norm of random matrices due to Lata\l a \cite{Latala} and a standard truncation argument. See the proof of \cite[Lemma 5.5]{Cook:ssv} for the details. The bound for the Gaussian case is well-known; see for instance \cite[Lemma 3.1]{RuZe}.
\end{proof}


The singularity of the logarithm at zero requires lower bounds on the small singular values of $\frac{1}{\sqrt{np}}Y-zI$. 
It turns out that for $z$ inside the support of the limiting ESD (i.e.\ $|z|\le 1-\eps$), the limiting singular value distributions $\nu_z$ from \Cref{prop:sing} have a positive density in a neighborhood of zero.
Based on this fact (which is not needed for the proof), we might expect that the smallest singular value $s_n(\frac1{\sqrt{np}}Y-zI)$ to live at scale $\sim 1/n$, and in general that the $k$-th smallest singular value would be at scale $\sim k/n$. 
The following two propositions provide quantitative lower bounds at these scales.

\begin{proposition}[The smallest singular value]		\label{prop:least}
For any $t>0$, 
\begin{equation}	\label{ssv:general}
\pro{ s_n(Y-z\sqrt{np}I) \le tn^{-1/2}} \ll_{z,p} t + n^{-1/2}.
\end{equation}
Moreover, if $\xi$ is a standard Gaussian variable then we have that for any $K>0$,
\begin{equation}	\label{ssv:gaussian}
\pro{ s_n(Y-z\sqrt{np}I) \le n^{-K-1/2}} \ll_{z,p,K} n^{-K}.
\end{equation}
\end{proposition}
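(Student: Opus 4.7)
The plan is to condition on the realization of $A$ and reduce to an invertibility estimate for a random matrix with independent, non-identically distributed entries whose sparsity support is broadly connected. Conditionally on $A$, the entries of $M := Y - z\sqrt{np}\,I$ are mutually independent functions of the iid variables $\xi_{ij}$: indeed $M_{ij} = a_{ij}\xi_{ij} - z\sqrt{np}\,\delta_{ij}$, so the off-diagonal support of $M$ coincides with the edge set of $A$, while the diagonal is shifted deterministically. This deterministic shift is harmless because the smallest-singular-value bounds I will invoke hold uniformly over deterministic additive shifts.

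For the general bound \eqref{ssv:general}, I would first apply \Cref{prop:broad} to place $A$ in the event where its support is broadly connected; the complementary event has probability at most the $O(n^{-1/2})$ error already allowed by the statement (this structural estimate is proved in \Cref{sec:redux} via the graph-discrepancy bounds from \cite{Cook:discrep}). On the broadly-connected event I would then apply the Wegner-type estimate \Cref{thm:broad_small} to the conditional law of $M$; after the natural rescaling by $1/\sqrt{n}$, so that the typical scale of $s_n$ matches the threshold $t n^{-1/2}$ in the statement, this directly yields the bound $\ll_{z,p} t + n^{-1/2}$. Any a priori operator-norm hypothesis required by \Cref{thm:broad_small} can be discharged by \Cref{lem:opnorm}, and its $O(n^{-\eta/8})$ failure probability is absorbed into the stated $n^{-1/2}$ error.

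For the Gaussian bound \eqref{ssv:gaussian} the conditioning strategy is identical, but on the broadly-connected event I would replace \Cref{thm:broad_small} by the stronger Rudelson--Zeitouni estimate from \cite{RuZe}, which for Gaussian matrices with broadly-connected support and any deterministic additive shift delivers polynomial-in-$n$ lower bounds of the form $\pro{s_n(M) \le n^{-K-1/2}} \ll_K n^{-K}$. Combined with the exponentially small failure probabilities in \Cref{lem:opnorm} and in the Gaussian version of \Cref{prop:broad}, this yields \eqref{ssv:gaussian}. The hard part is entirely contained in \Cref{thm:broad_small}, the new Wegner-type estimate proved in \Cref{sec:wegner}; the structural input \Cref{prop:broad} is essentially combinatorial and reduces to the discrepancy bound from \cite{Cook:discrep}, while the Rudelson--Zeitouni bound in the Gaussian case is used as a black box.
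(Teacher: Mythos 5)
Your high-level plan---condition on the realization of $A$, pass to the event that the support is broadly connected, control the operator norm via \Cref{lem:opnorm}, and then invoke an invertibility estimate for random matrices with broadly-connected profile---is exactly what the paper does. However, you have the key references switched in a way that creates a genuine gap.

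First, \Cref{prop:broad} is not the combinatorial statement that $A$ is broadly connected with high probability; that role is played by \Cref{prop:discrep}, which is the result that reduces to the discrepancy bound of \cite{Cook:discrep}. More importantly, \Cref{thm:broad_small} cannot give a bound on $s_n$: it is a Wegner-type estimate that controls $s_{n-k}(M)$ only for $k \in [n^{2\eps}, a_1 n]$, at the scale $k/\sqrt n \gg n^{2\eps-1/2}$, and says nothing at all about $k=0$. No amount of rescaling makes it applicable to the smallest singular value. The tool that actually delivers \eqref{ssv:general} and \eqref{ssv:gaussian} is \Cref{prop:broad} itself, which packages \cite[Theorem~1.13]{Cook:ssv} (general case, giving $\ll t + n^{-1/2}$) and \cite[Theorem~2.3]{RuZe} (Gaussian case, giving $\ll t + e^{-cn}$, applied with $t = n^{-K}$). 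Your error accounting also suffers as a result: the $n^{-1/2}$ term in \eqref{ssv:general} is the additive error intrinsic to the invertibility estimate of \cite{Cook:ssv}, not the failure probability of the broadly-connected event (the latter, by \Cref{prop:discrep}, can be made $O(n^{-K})$ for any $K$). \Cref{thm:broad_small} is needed only for \Cref{prop:small}, the companion result on moderately small singular values, and plays no role in the proof of \Cref{prop:least}.

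Once these references are corrected, the argument is the paper's: restrict to $\{\|M\|\le K_0\sqrt n\}$ via \Cref{lem:opnorm}, restrict to the broadly-connected event via \Cref{prop:discrep}, and apply \eqref{broad:poly} (resp.\ \eqref{broad:gaussian}) of \Cref{prop:broad} with $\Sigma = A$ and $Z = -z\sqrt{np}\,I$.
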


\begin{proposition}[Local Wegner estimate]	\label{prop:small}
There are constants $a_0,a_1,a_2>0$ depending only on $p,z$ such that for all $\alpha \in (0,1)$, except with probability $O(\exp(-a_0n^\alpha))$, for all $i\in [n^\alpha, a_1n]$ we have
\begin{equation}
s_{n-i}\left(\frac{1}{\sqrt{np}}Y-zI\right)  \ge a_2\frac{i}{n}.
\end{equation}

\end{proposition}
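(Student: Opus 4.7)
The plan is to combine the two ingredients previewed in \Cref{sec:main_ch5}: the broad connectivity of the support of the rrd matrix $A_n$ (\Cref{prop:broad}) with the general Wegner-type estimate for random matrices whose variance profile is broadly connected (\Cref{thm:broad_small}).

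First I would condition on the high-probability event provided by \Cref{prop:broad}, on which $A_n$ satisfies the broad connectivity condition; from the discussion in the introduction this event should fail with probability at most $\exp(-cn)$ or similar. Conditional on $A_n$, the matrix $\frac{1}{\sqrt{np}}Y_n - zI$ is a random matrix whose non-zero off-diagonal entries are independent copies of $\xi/\sqrt{np}$ placed at positions prescribed by $A_n$, shifted by the deterministic diagonal $-zI$. Its variance profile is $\frac{1}{np} A_n$, which inherits the broad connectivity of $A_n$.

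Next I would apply \Cref{thm:broad_small} directly to this matrix. Since that theorem is advertised as giving a Wegner-type bound at essentially optimal scale, it should yield, for any $\alpha \in (0,1)$, an exceptional set of probability $\exp(-a_0 n^\alpha)$ on which some $s_{n-i}(\frac{1}{\sqrt{np}}Y_n - zI)$ fails to exceed $a_2 i/n$ for $i$ in the range $[n^\alpha, a_1 n]$. A union bound with the complement of the broad-connectivity event then completes the proof, with the constants $a_0,a_1,a_2$ inheriting their dependence on $p$ (through the variance scale $1/(np)$ and the broad-connectivity parameters) and on $z$ (through the operator-norm input needed inside \Cref{thm:broad_small}, cf.\ \Cref{lem:opnorm}).

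The main obstacle I anticipate is verifying that \Cref{thm:broad_small} is applicable with the deterministic shift $-zI$: one either needs a version of the theorem that permits an arbitrary fixed additive perturbation, or one needs to argue that broad connectivity together with the perturbation by a diagonal matrix still yields the effective spread required in the proof of \Cref{thm:broad_small}. A secondary concern is compatibility of the $4+\eta$ moment hypothesis on $\xi$ with whatever tail assumption is required by \Cref{thm:broad_small}; this will typically be handled via truncation and via the operator-norm bound of \Cref{lem:opnorm}, contributing at worst a polynomial probability loss that is absorbed by the main $\exp(-a_0 n^\alpha)$ term.
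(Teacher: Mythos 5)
Your plan is essentially the paper's: condition on $A_n$ having broadly connected support and on the operator-norm event, then invoke \Cref{thm:broad_small} and union bound. A few corrections to the details.

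\emph{Citation.} The result that $A_n$ is broadly connected with high probability is \Cref{prop:discrep} (proved via \Cref{lem:discrep}), not \Cref{prop:broad}, which is the smallest-singular-value estimate and is instead used for \Cref{prop:least}. Also note that \Cref{prop:discrep} gives failure probability $O_{p,K}(n^{-K})$, coming from the bound $e^{-c\log^2 n}$ in \Cref{lem:discrep}, not $\exp(-cn)$ as you guessed; together with the $O(n^{-\eta/8})$ loss from \Cref{lem:opnorm}, the unconditional failure probability is really $O(\exp(-a_0 n^\alpha) + n^{-\eta/8})$, which is all that is needed downstream.

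\emph{Scaling.} Do not apply \Cref{thm:broad_small} directly to $\frac{1}{\sqrt{np}}Y_n - zI$ with profile $\frac{1}{\sqrt{np}}A_n$: those profile entries are $O(n^{-1/2})$, so no fixed cutoff $\ha\in(0,1)$ survives the thresholding $\Sig(\ha)$ in \Cref{def:broadprofile}, and the broad connectivity parameters would degenerate with $n$. Instead, as the paper does, apply the theorem to $M = Y - z\sqrt{np}\,I$ with $\Sig = A_n$ (entries in $\{0,1\}$, so e.g. $\ha = 1$ works) and $Z = -z\sqrt{np}\,I$. This yields $s_{n-k}(M) \ge a_2 k/\sqrt{n}$ on the good event, and dividing by $\sqrt{np}$ gives $s_{n-k}\bigl(\tfrac{1}{\sqrt{np}}Y - zI\bigr) \ge (a_2/\sqrt{p})\,k/n$, which is the claim after absorbing $1/\sqrt{p}$ into $a_2$.

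\emph{The deterministic shift.} Your ``main obstacle'' is a non-issue: \Cref{def:broadprofile} already builds in an arbitrary deterministic matrix $Z$, and \Cref{thm:broad_small} is stated for $M = \Sig\circ X + Z$. So $Z = -z\sqrt{np}\,I$ is accommodated with no extra argument; its effect is absorbed into the operator-norm bound $K_0 = O_{p,z}(1)$ supplied by \Cref{lem:opnorm}.
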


We defer the proofs to Sections \ref{sec:redux} and \ref{sec:wegner}, and conclude the proof of \Cref{thm:main} on Propositions \ref{prop:sing}, \ref{prop:least}, \ref{prop:small} and \Cref{lem:opnorm}.
We will first argue the general case; at the end we will discuss the necessary modifications for the Gaussian case. 


Fix $z\in \C$. 
Here we allow implied constants to depend on $\eta,p$ and $z$.
Our aim is to show that for any $\eps>0$,
\begin{equation}	\label{aimlog}
\pro{\big|\nu_{n,z}(\log) - \nu_z(\log)\big|>\eps} =o(1)
\end{equation}
with $\nu_z$ as in \Cref{prop:sing}.
From \Cref{lem:opnorm} we have 
\begin{equation}	\label{upper0}
\pro{\nu_{n,z}([C_0,\infty)) =0}=1-o(1)
\end{equation}
for some $C_0>0$ sufficiently large depending only on $z$. 
In particular, with \Cref{prop:sing} this implies that $\nu_z$ is supported on $[0,C_0)$.
For $\delta>0$ small, let $f_\delta\in C_c(\R_+)$ satisfy
$$f_\delta(s)= 
\begin{cases}
0 & s\in [0,\delta/2]\cup[2C_0,\infty)\\
\log(s) & s\in [\delta,C_0]
\end{cases}
$$
and take $f_\delta$ to be linearly increasing and decreasing on the intervals $[\delta/2,\delta]$ and $[C_0,2C_0]$, respectively. 
From \Cref{prop:sing} we have that for any fixed $\delta>0$, $\nu_{n,z}(f_\delta) \to \nu_z(f_\delta)$ almost surely,
so from \eqref{upper0} it only remains to show that with high probability,
\begin{equation}	\label{remainseta}
\int_0^\delta |\log(s)| d\nu_{n,z}(s) = O(\delta^{0.9})
\end{equation}
(say) for all $\delta>0$ sufficiently small. 

For ease of notation we write $s_i$ for $s_i(\frac{1}{\sqrt{np}}Y-zI)$. 
Fix $\alpha\in (0,1)$ arbitrarily. 
We write left hand side of \eqref{remainseta} as
\begin{equation}	\label{finish.split}
\frac{1}{n} \sum_{i=0}^{n-1} |\log(s_{n-i})|\un(s_{n-i}\le \delta) = \frac1n\sum_{i=0}^{\lf n^{\alpha}\rf} |\log(s_{n-i})| + \frac1n\sum_{i>n^\alpha} |\log(s_{n-i})|\un(s_{n-i}\le \delta).
\end{equation}
By \eqref{ssv:general} in \Cref{prop:least}, except with probability $O(n^{-1/2})$, the first term on the right hand side is bounded by $O(n^{1-\alpha} \log n) =o(1)$. 
By \Cref{prop:small}, taking $\delta<a_1a_2$ we have except with probability $O(\expo{-a_0n^\alpha})$, the second term is bounded by
$$
\frac{1}{n} \sum_{1\le i<\delta n/a_2} \left|\log\left(a_2\frac{i}{n}\right)\right| \ll \frac{1}{n}\sum_{1\le i<\delta n/a_2} \left(a_2\frac{i}{n}\right)^{-0.1}\ll \int_0^{2\delta/a_2} s^{-0.1}ds = O(\delta^{0.9})
$$
which completes the proof. 

Now for the Gaussian case, our aim is to show $\nu_{n,z}(\log)\to \nu_z(\log)$ almost surely. 
By the Borel--Cantelli lemma it suffices to establish \eqref{aimlog} with a bound that is summable in $n$ for any fixed $\eps>0$. 
Fixing $\eps>0$, we only need to verify that the $o(1)$ term in \eqref{upper0} is summable in $n$, and that \eqref{remainseta} holds with probability $1-O(n^{-2})$, say.
For the former, by \Cref{lem:opnorm} we can replace the bound $o(1)$ in \eqref{upper0} with $O(e^{-cn})$. 
For the latter we only need to replace the application of \eqref{ssv:general} with \eqref{ssv:gaussian}, taking $K=2$. 
This completes the proof for the Gaussian case of \Cref{thm:main}.

\section{Weak convergence of singular value distributions}		\label{sec:svconv}

In this section we prove \Cref{prop:sing}.
Specifically, we show that for any $f\in C_c(\R_+)$ and any $\eps>0$, 
\begin{equation}	\label{goal0_ch5}
\pro{|\nu_{n,z}(f) -\nu_z(f)|>\eps} \ll_p \expo{ -c_0pn^2}
\end{equation}
for some $c_0=c_0(\eps,f)>0$, where here and in the remainder of the section we allow implied constants to depend on $f$ and $\eps$. 
The desired result will then follow from the Borel--Cantelli lemma.

Following an approach of Tran, Vu and Wang from \cite{TVW}, we compare the signed 
adjacency matrix $Y$ with an i.i.d.\ matrix taking the form
$$\widetilde{Y} =  (b_{ij}\xi_{ij})_{1\le i,j\le n} = B\schur X$$
where the variables $b_{ij}$ are Bernoulli($p$) indicator variables (recall that $d=\lf p n\rf$), jointly independent of each other and of the matrix $X$. 
Note that the entries of $\frac{1}{\sqrt{p}}\wY$ are i.i.d.\ and centered with unit variance.
Writing $\wnu_{n,z}:=\frac{1}{n}\sum_{i=1}^n \delta_{s_i(\frac{1}{\sqrt{np}}\wY-zI)}$, 
it follows from a result of Dozier and Silverstein \cite{DoSi07a} that for all $z\in \C$ the measures $\wnu_{n,z}$ converge weakly almost surely to a deterministic probability measure $\nu_z$;  see \cite[Lemma 3]{PaZh} for the verification that \eqref{nuzlogp} holds with $\mu=\muc$.
In particular, we have that for any $f\in C_b(\R_+)$, 
\begin{equation}	\label{econverge}
|\nu_z(f) -\e \wnu_{n,z}(f)| = o(1)
\end{equation}
so it suffices to show
\begin{equation}		\label{goal1}
\pro{|\nu_{n,z}(f) - \e\wnu_{n,z}(f)|>\eps} \ll \expo{ -c_0pn^2}.
\end{equation}

Define the event
\begin{equation}
\event_{n,d}:= \big\{ B\in \mA_{n,d}\big\} 
\end{equation}
that the Bernoulli matrix $B$ is the adjacency matrix of a $d$-regular digraph. 
Note that $B|\event_{n,d}\eqd A.$
Hence, to bound an event $\set{A\in \mA_0}$ for some set $\mA_0\subset \mA_{n,d}$, we can bound the corresponding event $\set{B\in \mA_0}$ and apply the bound
\begin{equation}	\label{condid}
\pro{A\in \mA_0} = \frac{\pro{B\in \mA_0\cap\mA_{n,d}}}{\pro{\event_{n,d}}} \le \frac{\pro{B\in \mA_0}}{\pro{\event_{n,d}}}
\end{equation}
together with a lower bound on $\pro{\event_{n,d}}$. 
For this we have the following result of Canfield and McKay from \cite{CaMc}.

\begin{lemma}	\label{lem:tran}
Assume $\min(d,n-d)\gg n/\log n$. Then
\begin{equation}
\pro{ \event_{n,d}} = (1+o(1))\sqrt{2\pi d(n-d)} \expo{ -n\log\left(\frac{2\pi d(n-d)}{n}\right) }.
\end{equation}
In particular, if $d=\lf pn\rf$ for some fixed $p\in (0,1)$, then
\begin{equation}
\pro{ \event_{n,d}} \ge \expo{ -O_p(n\log n)}.
\end{equation}

\end{lemma}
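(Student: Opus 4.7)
The plan is to reduce the estimate to a direct application of the Canfield--McKay asymptotic enumeration formula for dense 0--1 matrices with prescribed line sums, combined with elementary Stirling estimates. First I would observe that every element of $\mM_n(d)$ has exactly $nd$ entries equal to $1$ and $n(n-d)$ entries equal to $0$, so under the product Bernoulli($p$) law governing $B$, each specific matrix in $\mM_n(d)$ is realized with the same probability $p^{nd}(1-p)^{n(n-d)}$. Summing over $\mM_n(d)$ gives the key identity
$$\pro{\event_{n,d}} = |\mM_n(d)| \cdot p^{nd}(1-p)^{n(n-d)}.$$

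Next I would invoke the main theorem of \cite{CaMc}, which provides an asymptotic formula for $|\mM_n(d)|$ valid throughout the dense regime $\min(d,n-d)\gg n/\log n$; in a convenient form this reads $|\mM_n(d)| = (1+o(1)) e^{1/2} \binom{n}{d}^{2n}\binom{n^2}{nd}^{-1}$. After substituting this and applying Stirling's formula to each binomial coefficient, the factor $p^{nd}(1-p)^{n(n-d)}$ (with $p = d/n + O(1/n)$, which suffices since the discrepancy only affects the $(1+o(1))$ multiplicative error) precisely cancels the dominant contribution $n^{n^2}/(d^{nd}(n-d)^{n(n-d)})$ arising from the ratio $\binom{n}{d}^{2n}/\binom{n^2}{nd}$. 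What remains is exactly the subdominant prefactor $\sqrt{2\pi d(n-d)}$ together with the exponential $\exp(-n\log(2\pi d(n-d)/n))$ promised by the lemma.

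For the ``in particular'' assertion, I would simply substitute $d=\lf pn\rf$ with $p\in(0,1)$ fixed. Then $d(n-d) = \Theta_p(n^2)$, so $-n\log(2\pi d(n-d)/n) = -n\log n - O_p(n)$, and the polynomial prefactor $\sqrt{2\pi d(n-d)} \le O_p(n)$ is absorbed into the exponential, yielding $\pro{\event_{n,d}}\ge \exp(-O_p(n\log n))$.

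There is no substantive obstacle here: the Canfield--McKay hypothesis $\min(d,n-d)\gg n/\log n$ is trivially satisfied when $d=\lf pn\rf$ with $p\in(0,1)$ fixed, and the remaining work is routine bookkeeping with Stirling's formula. The only mildly delicate point is to track the subleading factor carefully so that the prefactor comes out as $\sqrt{2\pi d(n-d)}$ rather than something differing by a power of $\sqrt{n}$; this is a direct check of the $K(N-K)/N = d(n-d)$ term appearing inside the Stirling prefactor for $\binom{n^2}{nd}$.
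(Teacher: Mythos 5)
The paper does not actually present a proof of this lemma; it invokes Canfield--McKay \cite{CaMc} directly, and the Bernoulli-product identity $\pro{\event_{n,d}} = |\mM_n(d)|\, p^{nd}(1-p)^{n(n-d)}$ is exactly the intended bridge from their enumeration to the probability statement. So your overall plan is the right one.

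However, your cancellation argument for the $(1+o(1))$ precision does not go through, and this is a real gap rather than mere bookkeeping. You claim the discrepancy between $p$ and $d/n$ ``only affects the $(1+o(1))$ multiplicative error.'' That is false: writing $\epsilon = p - d/n = \{pn\}/n \in [0,1/n)$, the linear-in-$\epsilon$ term in $\log p^{nd}(1-p)^{n(n-d)} - \log (d/n)^{nd}(1-d/n)^{n(n-d)}$ cancels, but the quadratic term is $-\epsilon^2 n^4/\big(2d(n-d)\big) + O(1/n) = -\Theta_p\big(\{pn\}^2\big)$, a quantity that oscillates with $n$ and does not tend to zero. Two further $\exp\big(\Theta_p(1)\big)$ factors are hiding in the same way: the Canfield--McKay constant in the equal-margins case is $e^{-1/2}$, not $e^{1/2}$, and it does not cancel; and Stirling's correction for $\binom{n}{d}$, which is $\exp\big(O(1/n)\big)$, becomes $\exp\big(\tfrac16 - \tfrac{n^2}{6d(n-d)} + o(1)\big)$ once raised to the power $2n$. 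So ``what remains'' after your cancellation is the displayed expression \emph{times} an oscillating factor bounded above and below by positive $p$-dependent constants, not $(1+o(1))$ -- arguably the $(1+o(1))$ in the paper's own display is imprecise for the same reason. None of this affects the conclusion: each of the three corrections is $\exp\big(O_p(1)\big)$, the leading term $-n\log\big(2\pi d(n-d)/n\big)$ is $-n\log n - O_p(n)$, and the polynomial prefactor is harmless, so $\pro{\event_{n,d}} \ge \exp\big(-O_p(n\log n)\big)$ follows and your handling of the ``in particular'' clause is correct. For a clean rigorous write-up it is better to drop the $(1+o(1))$-precision claim entirely, bound each of the three discrepancies by $\exp\big(O_p(1)\big)$, and conclude directly that $|\mM_n(d)|\,p^{nd}(1-p)^{n(n-d)} = \exp\big(-n\log n - O_p(n)\big)$, which is all that is used downstream.
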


By the above lemma, \eqref{condid} and \eqref{goal1}, it suffices to show that for any $f\in C_c(\R_+)$ and any $\eps>0$,
\begin{equation}		\label{goal2}
\pro{|\wnu_{n,z}(f) - \e\wnu_{n,z}(f)|>\eps} \ll \expo{ -c_0pn^2}
\end{equation}
for some $c_0(\eps,f)>0$.

We have hence reduced our task to proving a concentration bound for linear statistics of the singular value distribution of the perturbed i.i.d.\ matrix $\frac{1}{\sqrt{np}}\wY - zI$. 
For this task we have the following lemma, which follows from the work of Guionnet and Zeitouni in \cite{GuZe}:
\begin{lemma}[Concentration of linear statistics]	\label{lem:guze}
Let $H=(h_{ij})_{1\le i,j\le n}$ be a Hermitian random matrix, and assume the variables on and above the diagonal are jointly independent and that $|h_{ij}|\le K/\sqrt{n}$ uniformly in $i,j$ for some $K\in (0,\infty)$. 
Let $f:\R\rightarrow \R$ be an $L$-Lipschitz function supported on a compact interval $I\subset\R$, and let $H_0$ be an arbitrary deterministic $n\times n$ Hermitian matrix. 
Then for any $\delta>0$,
\begin{equation}
\pr\big( \left| \mu_{H+H_0}(f)- \e \mu_{H+H_0}(f)\right| \ge \delta\big) \le \frac{C|I|}{\delta}\expo{ -\frac{cn^2\delta^4}{K^2L^2|I|^2}}
\end{equation}
for some absolute constants $C,c>0$.
\end{lemma}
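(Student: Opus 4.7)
The strategy, following Guionnet and Zeitouni \cite{GuZe}, is to view $F(H) := \mu_{H+H_0}(f)$ as a function of the jointly independent entries $\{h_{ij} : i \le j\}$ (a product of bounded domains, each of diameter $O(K/\sqrt{n})$) and apply concentration for Lipschitz functions of bounded independent random variables. Since $H_0$ is deterministic, it enters only as a fixed translation and does not interact with the probabilistic argument beyond shifting the spectrum.

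The first step is a Lipschitz estimate for $F$. By the Hoffman--Wielandt inequality, the ordered eigenvalue map is $1$-Lipschitz from Hermitian matrices with Hilbert--Schmidt norm into $(\R^n,\|\cdot\|_2)$; composing coordinate-wise with the $L$-Lipschitz function $f$ preserves the modulus up to a factor of $L$, and averaging by $1/n$ followed by Cauchy--Schwarz yields
\[
|F(H) - F(H')| \,\le\, \frac{L}{\sqrt{n}}\, \|H - H'\|_{\mathrm{HS}}.
\]

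With this Lipschitz control in hand, one can invoke standard Gaussian concentration. Each coordinate $h_{ij}$ is supported in a ball of radius $K/\sqrt{n}$, so satisfies a log-Sobolev inequality with constant $O(K^2/n)$; tensorization and Herbst's argument applied to the $(L/\sqrt{n})$-Lipschitz function $F$ produce a baseline deviation bound of order $\exp(-cn^2 \delta^2/(K^2 L^2))$, uniformly in $H_0$. Alternatively, Talagrand's convex-distance inequality gives the same bound for convex $F$.

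The final and most delicate step is to refine the baseline inequality so as to recover the precise $\delta^4/(K^2L^2|I|^2)$ form with the polynomial prefactor $C|I|/\delta$. The mechanism in \cite{GuZe} is a smoothing argument exploiting the compact support of $f$: one approximates $f$ by a convolution at scale $\eta$, uses $\|f\|_\infty \le L|I|$ to bound the variance of the smoothed linear statistic at scale $O(K^2L^2|I|^2/n^4)$, and then balances the smoothing error $O(L\eta)$ against the Chebyshev tail $\mathrm{Var}/\delta^2$ by optimizing over $\eta$. Concretely one verifies the Lipschitz and boundedness hypotheses (as established above) and directly applies the corresponding concentration theorem of \cite{GuZe}. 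I expect the main obstacle, which is technical rather than conceptual, to be this bookkeeping required to track the interplay between smoothing scale, support length, and deviation level and thereby extract the claimed exponent; since the bound we need is exactly the content of \cite{GuZe}, this step is handled by quotation.
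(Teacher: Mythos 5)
Your high-level plan is the same as the paper's: quote Guionnet--Zeitouni and observe that the deterministic shift $H_0$ commutes with the argument. Your Hoffman--Wielandt Lipschitz estimate is correct and, usefully, it already incorporates $H_0$ (the shift cancels in $\|(H+H_0)-(H'+H_0)\|_{\mathrm{HS}}$). The paper makes the translation-invariance point slightly more sharply: the one step of \cite{GuZe} that touches $H_0$ is their Lemma~1.2, which asserts that $H\mapsto \mu_H(f)$ is convex and $O(L/\sqrt n)$-Lipschitz when $f$ is convex and $L$-Lipschitz, and both of these properties are manifestly invariant under $H\mapsto H+H_0$. That is the entire content of the paper's proof, since everything else in \cite{GuZe} Theorems~1.1(a) and 1.3(a) then applies verbatim.

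Two points in your sketch of the \cite{GuZe} internals, however, are incorrect. First, bounded random variables do \emph{not} in general satisfy a logarithmic Sobolev inequality with respect to the Euclidean carr\'e du champ (think of Bernoulli entries), so the ``LSI with constant $O(K^2/n)$ plus Herbst'' route is unavailable; this is precisely why Guionnet--Zeitouni separate the bounded case from the LSI case and handle bounded entries via Talagrand's inequality, which in turn requires the function $H\mapsto \mu_{H+H_0}(f)$ to be \emph{convex}, i.e.\ requires $f$ convex. Second, the passage from convex $f$ (Theorem~1.1(a)) to compactly supported Lipschitz $f$ (Theorem~1.3(a)) is not a ``smooth, bound variance, Chebyshev'' argument: \cite{GuZe} decompose $f$ on $I$ into $O(L|I|/\delta)$ pieces on each of which one can apply the convex-case bound at a reduced deviation level, and then take a union bound. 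It is this decomposition-plus-union-bound mechanism that simultaneously produces the polynomial prefactor $C|I|/\delta$ and promotes $\delta^2$ in the exponent to $\delta^4/(L^2|I|^2)$. Since you ultimately defer to quotation, the conclusion is fine, but the intermediate description should be corrected before you rely on it.
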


\begin{proof}
For the case that $H_0=0$, this follows directly from \cite[Theorem 1.3(a)]{GuZe}. 
For the general case, the only part of the argument that needs modification is in the proof of their Theorem 1.1(a), where we need to show that for $f:\R\rightarrow \R$ convex and $L$-Lipschitz, the function 
$H\mapsto \mu_{H+H_0}(f)$ is a convex and $O(L)$-Lipschitz function on the space of Hermitian matrices.
However, this follows directly from their Lemma 1.2 and the fact that the convexity and Lipschitz properties are invariant under translations $H\mapsto H+H_0$.
The rest of the proofs of \cite[Theorem 1.1(a)]{GuZe} and \cite[Theorem 1.3(a)]{GuZe} apply with no modification.
\end{proof}

Fix $\eps>0$ and $f\in C_c(\R)$.
To apply the above concentration estimate to the measures $\wnu_{n,z}$, we recall the linearization approach to the study of singular value distributions of random matrices. 
From $\wY$ we form a $2n\times 2n$ Hermitian matrix 
\begin{equation}
H(z)= \frac{1}{\sqrt{np}}\begin{pmatrix} 0 & \wY - z\sqrt{np}I\\ (\wY-z\sqrt{np}I)^* & 0\end{pmatrix}.
\end{equation}
It is routine to verify that the $2n$ eigenvalues of $H(z)$, counted with multiplicity, are $\{\pm s_i(\frac{1}{\sqrt{np}}\wY -zI)\}_{i=1}^n$.
In terms of empirical spectral distributions, $\mu_{H(z)}$ is the symmetrization across the origin of the measure $\wnu_{n,z}$ on $\R_+$. 
From \eqref{goal2}, it now suffices to show
\begin{equation}	\label{goal3}
\pro{|\mu_{H(z)}(f) - \e\mu_{H(z)}(f)|>\eps} \ll \expo{ -c_0pn^2}.
\end{equation}

As $f$ is uniformly continuous, there exists $\delta=\delta(\eps,f)>0$ such that $|f(s)-f(t)|\le \eps/10$ whenever $|s-t|\le \delta$. By linear interpolation on a $\delta$-mesh of the support of $f$ we may find a function $f_\eps\in C_c(\R)$ with Lipschitz constant $O(\eps/\delta)=O_{\eps,f}(1)$, and such that $\|f-f_\eps\|_{\infty}\le \eps/10$. 
It now suffices to show 
\begin{equation}	\label{goal4}
\pro{|\mu_{H(z)}(f_\eps) - \e\mu_{H(z)}(f_\eps)|>\eps/2} \ll \expo{ -c_0pn^2}.
\end{equation}
Finally, we note that $H(z)$ has the form $H+H_0$ as in \Cref{lem:guze}, with $K=O(1/\sqrt{p})$, $H=H(0)$ and 
$$H_0= \begin{pmatrix} 0 & - zI\\ -z^*I & 0\end{pmatrix}.$$
Applying that lemma yields \eqref{goal4}, and hence \eqref{goal0_ch5}, which completes the proof.


\section{Bounds on small singular values}		\label{sec:redux}

In this section we deduce Propositions \ref{prop:least} and \ref{prop:small} from two general results on the small singular values of random matrices with independent entries: results of Rudelson--Zeitouni \cite{RuZe} and the author \cite{Cook:ssv} on the smallest singular value, and a new Wegner-type bound on ``moderately small" singular values. 

We begin by recalling some definitions and notation from \cite{Cook:ssv}.
The following allows us to quantify the dependence of our bounds on the distribution of the matrix entries.

\begin{definition}[Spread random variable]		\label{def:spread}
Let $\xi$ be a complex random variable and let $\kapa\ge 1$. 
We say that $\xi$ is \emph{$\kapa$-spread} if 
\begin{equation}	\label{def:kappa0}
\var \big[\,\xi\un(|\xi-\e\xi|\le \kapa)\,\big] \ge \frac1\kapa.
\end{equation}
\end{definition}

\begin{remark}	\label{rmk:kappap}
It follows from the monotone convergence theorem that any random variable $\xi$ with non-zero second moment is $\kapa$-spread for some $\kapa<\infty$.
Furthermore, if $\xi$ is centered with unit variance and finite $p$th moment $\mu_p$ for some $p>2$, then it is routine to verify that $\xi$ is $\kapa$-spread with $\kapa =3 (3\mu_p^p)^{1/(p-2)}$, say.
\end{remark}


Our results on small singular values assume the matrix $\Sig=(\sig_{ij})$ of standard deviations satisfies a certain expansion-type condition originally formulated in \cite{RuZe}. To state it we will need some graph-theoretic notation.
To a non-negative $n\times m$ matrix $\Sig=(\sig_{ij})$ we associate a bipartite graph $\Gamma_\Sig = ([n],[m], E_\Sig)$, with $(i,j)\in E_\Sig$ if and only if $\sig_{ij}>0$. 
For a row index $i\in [n]$ we denote by
\begin{equation}	\label{def:nbhd}
\mN_\Sig(i) = \set{j\in [m]: \sig_{ij}>0}
\end{equation}
its neighborhood in $\Gamma_\Sig$. 
Thus, the neighborhood of a column index $j\in [m]$ is denoted $\mN_{\Sig^\tran}(j)$.
For $I\subset [n]$ and $\delta\in (0,1)$, define the set of \emph{$\delta$-broadly connected} neighbors of $I$ as
\begin{equation}	\label{def:broadnbr}
\mN_{\Sig}^{(\delta)}(I)= \{ j\in [m]: |\mN_{\Sig^\tran}(j)\cap I|\ge \delta |I|\}.
\end{equation}
Given sets of row and column indices $I\subset [n], J\subset[m]$, we define the associated \emph{edge count}
\begin{equation}	\label{def:edges}
e_\Sig(I,J) := |\{(i,j)\in [n]\times [m]: \sig_{ij}>0\}|.
\end{equation}
We will generally work with the graph that only puts an edge $(i,j)$ when $\sig_{ij}$ exceeds some fixed cutoff parameter $\ha>0$.
Thus, we denote by
\begin{equation}	\label{Aha}
\Sig(\ha) = (\sig_{ij}1_{\sig_{ij}\ge \ha})
\end{equation}
the matrix which thresholds out entries smaller than $\ha$.

\begin{definition}[Random matrix with broadly connected profile]	\label{def:broadprofile}
Let $\Sig=(\sig_{ij})$ and $Z=(\be_{ij})$ be deterministic $n\times m$ matrices with $\sig_{ij}\in [0,1]$ and $\be_{ij}\in \C$ for all $i,j$.
We assume that $\Sig$ satisfies the following expansion property: for some $\ha,\delta,\nu\in (0,1)$ we have
\begin{enumerate}[(1)]
\item $|\mN_{\Sig(\ha)}(i)| \ge \delta m$ for all $i\in [n]$;\vspace{.2cm}
\item $|\mN_{\Sig(\ha)^\tran}(j)|\ge \delta n$ for all $j\in [m]$;\vspace{.2cm}
\item $|\mN_{\Sig(\ha)^\tran}^{(\delta)}(J)| \ge \min(n,(1+\nu)|J|)$ for all $J\subset [m]$.
\end{enumerate}
We say that a matrix $\Sig$ satisfying conditions (1)--(3) is \emph{$(\ha,\delta,\nu)$-broadly connected}.
Let $X=(\xi_{ij})$ be an $n\times m$ matrix with independent entries, all identically distributed to a complex random variable $\xi$ with mean zero and variance one.
Put 
\begin{equation}	\label{Mdef}
\M=\Sig\circ X+Z = (\sig_{ij}\xi_{ij}+ \be_{ij})_{i,j=1}^n
\end{equation}
where $\circ$ denotes the matrix Hadamard product.
We refer to $\Sig$ as the \emph{(standard deviation) profile} for $M$. 
Without loss of generality, we assume throughout that $\xi$ is $\kapa$-spread for some fixed $\kapa\ge1$.
\end{definition}

The following proposition collects known bounds on the smallest singular value for random matrices with a broadly connected profile.

\begin{proposition}[Smallest singular value: broadly connected profile]		\label{prop:broad}
Let $\M=\Sig\circ X+Z$ be an $n\times n$ matrix as in Definition \ref{def:broadprofile}.
Let $K_0\ge 1$. 
For any $t\ge 0$,
\begin{equation}	\label{broad:poly}
\pro{ s_n(\M) \le \frac{t}{\sqrt{n}}, \, \|\M\| \le K_0\sqrt{n} } \ll t + \frac{1}{\sqrt{n}}
\end{equation}
where the implied constant depends only on $K_0,\delta,\nu,\ha$ and $\kapa$.
If we further assume the entries of $X$ are standard real Gaussians, then
\begin{equation}	\label{broad:gaussian}
\pro{ s_n(\M) \le \frac{t}{\sqrt{n}}, \, \|\M\| \le K_0\sqrt{n} } \ll t + e^{-cn}
\end{equation}
where the implied constant and $c$ depend only on $K_0,\delta,\nu$ and $\ha$.
\end{proposition}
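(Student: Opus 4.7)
The plan is to derive this proposition as a direct application of existing lower-tail bounds for the least singular value of random matrices with broadly connected profile: the bound \eqref{broad:poly} from the author's prior work \cite{Cook:ssv}, and the bound \eqref{broad:gaussian} from Rudelson--Zeitouni \cite{RuZe}. Both works already treat matrices of exactly the form $\Sig\circ X + Z$ with $\Sig$ satisfying the same expansion-type hypotheses (1)--(3) laid out in Definition \ref{def:broadprofile}, so the main content of the proof is to verify that hypotheses and conclusions translate cleanly between the statements.

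For the general case, first I would note that Remark \ref{rmk:kappap} guarantees the variable $\xi$ is $\kapa$-spread for some finite $\kapa$ (since $\xi$ has unit variance), and in fact with $\kapa$ depending only on the relevant moment constant when a finite $(2+\eta)$-th moment is available. With this in hand I would invoke the main invertibility theorem of \cite{Cook:ssv} for $\M = \Sig\circ X + Z$ under the truncation event $\{\|\M\|\le K_0\sqrt{n}\}$, which yields precisely the bound $t + n^{-1/2}$ with implied constant depending only on $K_0,\delta,\nu,\ha,\kapa$. For the Gaussian case I would similarly invoke the corresponding theorem in \cite{RuZe}, which upgrades the additive $n^{-1/2}$ loss to $e^{-cn}$ when the entries of $X$ are iid standard real Gaussians, with constants depending only on $K_0,\delta,\nu,\ha$ (the parameter $\kapa$ being irrelevant in the Gaussian setting since the distribution is fixed).

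The only real item to check is that the broad connectedness conditions as formulated in \cite{RuZe,Cook:ssv} agree with Definition \ref{def:broadprofile} up to cosmetic notation: specifically, that conditions (1)--(2) on minimum row/column degrees of $\Sig(\ha)$ and condition (3) on the expansion of $\delta$-broadly-connected neighborhoods coincide with the hypotheses under which the cited smallest singular value estimates are proved. This is routine bookkeeping. Once verified, \eqref{broad:poly} and \eqref{broad:gaussian} follow immediately by specializing the cited theorems to the square ($m=n$) case. Thus there is no substantial mathematical obstacle here; the proposition is a convenient repackaging, and the remainder of the paper's effort will go into checking that random regular digraphs yield a profile satisfying the broad connectedness hypothesis and into proving the Wegner-type bound \Cref{thm:broad_small} needed for \Cref{prop:small}.
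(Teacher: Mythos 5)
Your proposal is correct and follows essentially the same route as the paper: the general bound \eqref{broad:poly} is a citation of \cite[Theorem 1.13]{Cook:ssv} and the Gaussian bound \eqref{broad:gaussian} is a citation of \cite[Theorem 2.3]{RuZe}, with the $\kapa$-spread hypothesis discharged by Remark~\ref{rmk:kappap}. The only small point you omit, which the paper flags, is that invoking \cite{RuZe} requires a short additional step (the operator norm bound of Lemma~\ref{lem:opnorm} together with the triangle inequality) to reconcile the norm conditioning in the cited theorem with the shifted matrix $\Sig\circ X+Z$; this is routine bookkeeping of the same flavor you already anticipate.
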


\begin{proof}
\eqref{broad:poly} follows from \cite[Theorem 1.13]{Cook:ssv}, while \eqref{broad:gaussian} follows from \cite[Theorem 2.3]{RuZe} (together with \eqref{lem:opnorm} and the triangle inequality for the operator norm). 
\end{proof}

We will prove the following theorem in \Cref{sec:wegner}.

\begin{theorem}[Wegner-type estimate: broadly connected profile]	 \label{thm:broad_small}
Let $M=\Sig\circ X+Z$ be an $n\times n$ matrix as in \Cref{def:broadprofile}.
Let $K_0\ge 1$ and $\eps\in (0,1)$.
There are constants $a_0,a_1,a_2>0$ depending (polynomially) on $\kapa,\ha,\delta,\nu,K_0$ such that
\begin{equation}
\pro{ \,  \exists k\in [n^{2\eps},a_1n]: s_{n-k}(M) < a_2 \frac{k}{\sqrt{n}} \;,\; \|M\|\le K_0\sqrt{n}\;} \ll \expo{-a_0n^\eps}
\end{equation}
where the implied constant depends on $\kapa,\ha,\delta,\nu,K_0$ and $\eps$.
\end{theorem}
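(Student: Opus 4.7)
The target probability $\exp(-a_0 n^\eps)$ suggests a per-$k$ estimate combined with a union bound. The plan is to show that for each fixed $k$ in the range $[n^{2\eps}, a_1 n]$,
\begin{equation*}
\pro{s_{n-k}(M) < a_2 k/\sqrt n,\ \|M\| \le K_0 \sqrt n} \le e^{-c k},
\end{equation*}
with $c>0$ depending polynomially on the profile parameters $\ha,\delta,\nu,\kapa,K_0$. Summing over the at most $n$ values of $k$ and using that $k \ge n^{2\eps}$ dominates $n^\eps$ then yields $n \cdot \exp(-c n^{2\eps}) \le \exp(-a_0 n^\eps)$. For the per-$k$ bound, apply the Courant--Fischer min-max principle: the event $\{s_{n-k}(M) < s\}$ with $s := a_2 k/\sqrt n$ is equivalent to the existence of orthonormal vectors $v_1,\ldots,v_{k+1} \in \C^n$ satisfying $\|M v_i\|_2 < s$ for all $i$. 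I would then invoke the Rudelson--Vershynin compressible/incompressible dichotomy used in \cite{RuZe, Cook:ssv}: a unit vector is \emph{incompressible} if at least a constant fraction of its coordinates have magnitude comparable to $1/\sqrt n$, and \emph{compressible} otherwise. By pigeonhole, either some $v_i$ is incompressible, or all $v_i$ are compressible.

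In the incompressible case, broad connectivity of $\Sig$ ensures that for a positive proportion of rows $j \in [n]$, the conditional variance $\sum_\ell \sig_{j\ell}^2 |v_{i,\ell}|^2$ against the spread vector $v_i$ is bounded below by $\ha^2 \delta$ times a constant. A Berry--Esseen anti-concentration estimate (invoking the $\kapa$-spread property of $\xi$) then yields $\pro{|\langle R_j, v_i\rangle| \le t} \ll t$ on those rows, and tensorizing over the independent rows gives $\pro{\|M v_i\|_2 < s} \le (Cs/\sqrt n)^{\alpha n}$ for some $\alpha = \alpha(\ha,\delta,\kapa) > 0$. A standard net argument on the sphere restricted to incompressible unit vectors (whose metric entropy is controlled via the compression parameter) then contributes an $e^{-c' n}$ term, much smaller than $e^{-ck}$.

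In the compressible case, each $v_i$ is within $\ell^2$-distance $\rho$ of a vector supported on at most $c_0(k+1)$ coordinates, and the subspace $V := \Span\{v_i\}$ is consequently close (in operator norm) to a coordinate subspace $\C^I$ with $|I| \le c_0(k+1)$. The condition $\|M v_i\|_2 < s$ then implies that the tall $n \times |I|$ submatrix $M_{[n],I}$ has smallest singular value $\le s + O(\rho K_0 \sqrt n)$. Taking $\rho \ll s/(K_0 \sqrt n)$ and applying a rectangular analogue of \Cref{prop:broad} to $M_{[n],I}$ (which inherits a broadly connected profile on its $|I|$ columns) yields a probability bound of $\exp(-c|I|)$. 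Union-bounding over the $\binom{n}{c_0(k+1)} = \exp(O(k \log(n/k)))$ choices of $I$ and absorbing log factors by choosing $a_2$ and $a_1$ sufficiently small delivers the $e^{-ck}$ target.

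The main obstacle is calibrating the parameters $(c_0, \rho, \alpha, a_1, a_2)$ simultaneously so that both cases yield compatible $e^{-ck}$ bounds at the common scale $k/\sqrt n$. Two technical points stand out: the anti-concentration in the incompressible case cannot use iid structure but must be extracted row-by-row from broad connectivity via the lower bound on $|\mN_{\Sig(\ha)}(j) \cap \supp(v_i)|$ forced by spread of $v_i$ together with the neighborhood assumption $|\mN_{\Sig(\ha)}(j)| \ge \delta n$; and a rectangular analogue of \Cref{prop:broad} must be established for the compressible case, since that proposition is stated only for square matrices. In the tall regime $|I| \ll n$, such an analogue should follow by adapting the arguments of \cite{Cook:ssv}, exploiting the favorable aspect ratio to obtain stronger-than-polynomial probability bounds.
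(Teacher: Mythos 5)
Your high-level plan (per-$k$ bound $\exp(-ck)$, union over $k$, and the observation that $k\ge n^{2\eps}$ absorbs the polynomial factors) is a reasonable target, but the per-$k$ argument has two genuine gaps, and the route differs substantially from the paper.

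\textbf{Incompressible case: the net argument does not close.} For a fixed incompressible unit vector $v$, row-by-row anti-concentration of order $t$ plus tensorization gives a bound of the form $\pro{\|Mv\|<s} \le (C s/\sqrt n)^{\beta n}$ for some $\beta\in(0,1)$. To propagate this across a $\gamma$-net of the (incompressible) sphere you need $\gamma\lesssim s/(K_0\sqrt n)$, hence a net of cardinality $(CK_0\sqrt n/s)^{2n}$. With $s=a_2 k/\sqrt n$ the union bound reads
\begin{equation*}
\Big(\tfrac{CK_0 n}{a_2 k}\Big)^{2n}\cdot\Big(\tfrac{C'a_2 k}{n}\Big)^{\beta n}
=\exp\!\Big(n\Big[2\log\tfrac{CK_0 n}{a_2 k}-\beta\log\tfrac{n}{C'a_2 k}\Big]\Big),
\end{equation*}
and since $\beta<1<2$, the exponent is positive in the relevant regime; shrinking $a_1,a_2$ only makes it worse. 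This is exactly the obstruction that forces the Rudelson--Vershynin analysis of $s_n$ to switch, in the incompressible regime, to the \emph{distance-to-a-hyperplane} argument rather than a net. Your proposal silently assumes the net works, which it doesn't.

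\textbf{Compressible case: the support size is off by a factor $n/k$.} In the standard dichotomy (which is what \cite{RuZe,Cook:ssv} and Definition \eqref{def:compr} in the paper use), a compressible unit vector is $\rho$-close to one supported on $\theta n$ coordinates for a \emph{fixed constant} $\theta$, not on $c_0(k+1)$ coordinates. There is no pigeonhole that upgrades ``every $v_i$ is $\theta n$-compressible'' to ``the span is near a coordinate subspace of dimension $O(k)$'' --- the span of $k+1$ vectors each close to a $\theta n$-sparse vector is near a coordinate subspace of dimension up to $(k+1)\theta n\gg k$. So the reduction to a tall $n\times O(k)$ submatrix, and hence the need for (and utility of) a rectangular analogue of \Cref{prop:broad}, doesn't follow.

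\textbf{What the paper does instead.} The paper uses the Tao--Vu \emph{inverse second moment identity} (\Cref{lem:distlb}): deleting $\lceil k/2\rceil$ rows, Cauchy interlacing, and $\sum s_i(M')^{-2}=\sum\dist(R_i,R_{-i})^{-2}$ give $s_{n-k}(M)\gg\sqrt{k/n}\,\min_i\dist(R_i,R_{-i})$. Each distance $\dist(R_i,R_{-i})$ is then lower-bounded via Talagrand concentration around its mean $\gtrsim\sqrt k$ (\Cref{lem:dist}), with the crucial input being that the normal vectors of $R_{-i}$ are incompressible --- this is supplied by \Cref{prop:slight}, which is the only place compressibility appears. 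This sidesteps the net entirely (the randomness in the unfixed row $R_i$ is used directly, not over a net of test vectors), and the relevant codimension is genuinely $\sim k$, so the scale $\sqrt k$ emerges naturally. Note also that the union bound in the paper is only over $i\in[n]$ and $k$ --- it does \emph{not} union over choices of which rows to delete (those are fixed to be the last $\lceil k/2\rceil$), which is why a per-$(i,k)$ bound of $\exp(-a_0 n^\eps)$ suffices. If you want to salvage your proposal, the cleanest fix is to replace the Courant--Fischer step by \Cref{lem:distlb} and then argue distances as in \Cref{lem:dist}.
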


To apply these results to the case that 
$\Sig=A_n$
we show the following:

\begin{proposition}[Random regular digraphs are broadly connected]	\label{prop:discrep}
Let $p\in (0,1)$ and let $A$ be a uniform random element of $\mA_{n,\lf p n\rf}$.
Then $A$ is $(1,p/2,p/8)$-broadly connected with probability $1-O_{p,K}(n^{-K})$ for any $K>0$.
\end{proposition}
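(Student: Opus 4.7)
The plan is to verify the three conditions of Definition~\ref{def:broadprofile} for $\Sig = A$ with parameters $\ha = 1$, $\delta = p/2$, and $\nu = p/8$. Since $A$ is a 0--1 matrix, the threshold $\ha = 1$ removes no entries, so $A(\ha) = A$. Conditions (1) and (2) are then immediate from the $d$-regularity of $A$: every row and column contains exactly $d = \lf pn\rf$ ones, which exceeds $(p/2) n$ for all $n$ sufficiently large depending on $p$.

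For condition (3), introduce the ``bad'' set $S(J) := [n] \setminus \mN^{(p/2)}_{A^\tran}(J)$, consisting of those rows $i$ with fewer than $(p/2)|J|$ ones in the columns indexed by $J$. The goal becomes to show that $|S(J)| \le \max(0, n - (1+p/8)|J|)$ holds for every $J \subset [n]$ with the required probability. The regime $|J| \ge n/(1+p/8)$ can be dispatched deterministically: there $n - |J| \le (p/8)|J|$, so by $d$-regularity any row $i$ satisfies $|\mN_A(i) \cap J| \ge d - (n - |J|) \ge d - (p/8)|J|$, which exceeds $(p/2)|J|$ since $d \ge pn - 1 \ge (5p/8)|J| + 1$ for $n$ large. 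Hence $S(J) = \emptyset$ in this range.

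For the substantive case $|J| < n/(1+p/8)$, failure of the bound on $|S(J)|$ would force the edge-count inequality
\begin{equation*}
e_A(S(J), J) = \sum_{i \in S(J)} |\mN_A(i) \cap J| < \tfrac{p}{2}|S(J)||J|,
\end{equation*}
whereas the typical value is $(d/n)|S(J)||J| = (1+o(1))\,p|S(J)||J|$, so this is a multiplicative deviation by a factor bounded away from $1$. The plan is to invoke the discrepancy estimate for random regular digraphs from \cite{Cook:discrep}, which shows that with probability $1 - O_{p,K}(n^{-K})$, no such deviation occurs simultaneously over any pair of subsets $S, J \subset [n]$.

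The main obstacle is controlling the deviation probability uniformly across all scales of $|S|$ and $|J|$. When $|J|$ is small relative to $n$, the union-bound entropy factor $\binom{n}{|S|}\binom{n}{|J|}$ is large while the expected edge count $p|S||J|$ is small, so a straightforward additive Chernoff bound would be inadequate; the multiplicative discrepancy estimate of \cite{Cook:discrep}, specialized to rrd matrices, is calibrated precisely to beat these entropy factors at all scales. Combined with a union bound over $(|S|, |J|)$, this yields condition (3) with the stated probability.
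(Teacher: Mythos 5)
Your handling of conditions (1) and (2) and of condition (3) for $|J|\ge n/(1+p/8)$ is correct, but the remaining case has a genuine gap. The discrepancy estimate from \cite{Cook:discrep} that the paper actually uses (quoted as \Cref{lem:discrep}) controls edge counts $e_A(I,J)$ only when \emph{both} $|I|$ and $|J|$ exceed $\eps_0 n$ for a fixed constant $\eps_0>0$; it does not, as you assert, ``beat the entropy factors at all scales.'' For your plan to work you must first show that a violation of condition (3) can only occur when both $|J|$ and $|S(J)|$ are of size $\gtrsim_p n$, and your write-up does not establish this.

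The missing ingredient is a short deterministic argument via column-regularity, which is precisely what the paper uses: every column of $A$ has exactly $d=\lf pn\rf$ ones, so writing $I_0=S(J)^c=\mN^{(p/2)}_{A^\tran}(J)$,
\begin{equation*}
d|J| = e_A([n],J) = e_A(I_0,J)+e_A(S(J),J) < |I_0|\,|J| + \tfrac{p}{2}n\,|J|,
\end{equation*}
which gives the deterministic lower bound $|I_0|> d-\tfrac{p}{2}n\ge \tfrac{p}{2}n-1$. In particular condition (3) holds automatically for all $|J|\le cpn$. Combining this with the row-regularity observation (which you do have, with a slightly different cutoff) confines the need for a discrepancy bound to the regime $cpn\le |J|\le (1-p/4)n$, and there the failure $|I_0|<(1+\nu)|J|$ together with $(1+2\nu)|J|\le n$ forces $|S(J)|=|I_0^c|>\nu|J|\gtrsim p^2 n$, so \Cref{lem:discrep} applies with $\eps_0=c'p^2$. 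Without these deterministic reductions, your invocation of a discrepancy bound ``over any pair of subsets $S,J\subset[n]$'' is unsupported; the proof needs exactly the column-regularity step you omitted.
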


To prove this we need the following result on edge counts in random regular digraphs, which is an immediate consequence of Corollary 1.9 in \cite{Cook:discrep}. (In the present setting the probability bound below can easily be improved, but it is sufficient for our purposes.) 

\begin{lemma}[No sparse patches]	\label{lem:discrep}
For $\eps_0>0$ let $\good(\eps_0)$ be the event that for all $I,J\subset[n]$ with $|I|,|J|\ge \eps_0n$, $e_A(I,J)\ge \frac12p|I||J|$.
For all $\eps_0>0$ we have
\[
\pro{ \good(\eps_0)} = 1-O_{p,\eps_0}(e^{-c\log^2n}).
\]
\end{lemma}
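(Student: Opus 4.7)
The plan is to deduce the lemma as an immediate corollary of the uniform discrepancy estimate Corollary 1.9 of \cite{Cook:discrep}, which controls $|e_A(I,J) - p|I||J||$ simultaneously over all $I,J\subset[n]$ with failure probability $O(e^{-c\log^2 n})$. In the regime of interest here, namely $|I|,|J|\ge \eps_0 n$, the typical discrepancy is of order $o(p|I||J|)$, so the lower bound $e_A(I,J)\ge \tfrac12 p|I||J|$ is automatic for $n$ sufficiently large in terms of $p$ and $\eps_0$. My task therefore reduces to verifying that the hypotheses of Corollary 1.9 cover the present range of parameters, and that its conclusion specializes correctly after discarding lower-order discrepancy terms.

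For a self-contained argument (which moreover yields a much stronger $e^{-\Omega(n^2)}$ bound in this macroscopic regime), I would compare with the iid Bernoulli model using the conditioning inequality \eqref{condid} together with the lower bound $\pr(\mE_{n,d})\ge \exp(-O_p(n\log n))$ from Lemma \ref{lem:tran}. For fixed $I,J$ with $|I|,|J|\ge \eps_0 n$ and $B$ the iid Bernoulli$(p)$ matrix, $e_B(I,J)$ is a sum of $|I||J|\ge \eps_0^2 n^2$ iid Bernoulli$(p)$ variables, so Chernoff's inequality gives
\[
\pr\bigl(e_B(I,J)\le \tfrac12 p|I||J|\bigr)\le \exp(-c p |I||J|)\le \exp(-c p\eps_0^2 n^2).
\]
Union bounding over the at most $4^n$ pairs $(I,J)$ and applying \eqref{condid}/Lemma \ref{lem:tran} then yields
\[
\pr(\good(\eps_0)^c)\le 4^n\exp(-c p\eps_0^2 n^2+O_p(n\log n))=\exp(-\Omega_{p,\eps_0}(n^2)),
\]
which is more than sufficient to imply the stated probability bound.

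The main obstacle, in either route, is the lack of independence among the entries of $A$. The conditioning reduction via \eqref{condid} dispatches this cleanly but at a cost of a factor $\exp(O_p(n\log n))$; at the current scale this loss is easily absorbed because the Chernoff exponent scales like $n^2$, but it would become the dominant term if one tried to extend to sets of size $o(n/\sqrt{\log n})$, where the more delicate switching analysis underlying Corollary 1.9 of \cite{Cook:discrep} becomes essential. Since the lemma only concerns macroscopic sets, neither route requires such finer input.
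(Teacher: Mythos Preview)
Your primary route---deducing the lemma directly from Corollary~1.9 of \cite{Cook:discrep}---is exactly what the paper does; there is no separate proof in the paper beyond this citation. Your alternative self-contained argument via the conditioning inequality \eqref{condid} and Chernoff is also correct and indeed yields the $\exp(-\Omega_{p,\eps_0}(n^2))$ bound; this is precisely the ``easily improved'' bound the paper alludes to parenthetically just before the lemma statement.
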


\begin{proof}[Proof of \Cref{prop:discrep}]
We may assume that $n$ is sufficiently large depending on $p$. 
$A$ clearly satisfies conditions (1) and (2) in \Cref{def:broadprofile} with $\delta = p/2$ since every row and column has support $\lf np\rf$. 
Now we verify condition (3).
Let $J\subset[n]$, and abbreviate $I_0=I_0(J) = \mN^{(p/2)}_{A^\tran}(J)$. 
Since each column of $A$ has support $\lf np\rf$, we have
\begin{align*}
\lf np\rf |J| &= \sum_{i=1}^n \sum_{j\in J} a_{ij}\\
&= \sum_{i\in I_0} |\mN(i)\cap J| + \sum_{i\notin I_0} |\mN(i)\cap J|\\
&< |I_0||J| + (p/2)n|J|
\end{align*}
which rearranges to 
\[
|I_0| > \frac12 pn - O(1) \gg pn.
\]
Thus, condition (3) is satisfied (deterministically) for any $J$ of size at most $cpn$ for a sufficiently small absolute constant $c>0$.
On the other hand, if $|J|>n(1-p/4)$ then we must have $|\mN(i)\cap J| \ge (p/2)n$ for all $i\in [n]$, so condition (3) also holds deterministically for any $J$ of size at least $n(1-p/4)$. 
So we may assume $cpn \le |J| \le (1-p/4)n$. 
In particular, 
\begin{equation}	\label{circ:Jbounds}
(1+2\nu)|J| = (1+p/4)|J|\le n
\end{equation}
Suppose that $|I_0(J)|< (1+\nu)|J|$.
Then 
\[
|I_0^c| >n-(1+\nu)|J| = \nu |J| + n-(1+2\nu)|J|\ge \nu |J|
\]
and
\[
e_A(I_0^c,J) = \sum_{i\in I_0^c} |\mN(i)\cap J| <\frac12p|I_0^c||J|.
\]
Thus, on the event that $|I_0(J)|<(1+\nu)|J|$ for some $J\subset[n]$, there exists $I\subset[n]$ such that $|I|\gg p|J|$ and $e_A(I,J)<\frac12p|I||J|$. 
But this means the event $\good(c'p^2)$ holds for some constant $c'>0$ sufficiently small, and the result follows from \Cref{lem:discrep}.
\end{proof}

Now we deduce Propositions \ref{prop:least} and \ref{prop:small} from \Cref{prop:broad} and \Cref{thm:broad_small}.
Fix $p\in (0,1)$, $z\in \C$, let $A,X,Y$ be as in \Cref{prop:least}, and write $Z:= -z\sqrt{np}I$, $M=Y+ Z$.
Conditional on any realization of $A$, by \Cref{lem:opnorm} we may restrict to the event $\{\|M\|\le K_0\sqrt{n}\}$ for some $K_0=O_{p,z}(1)$. 
By \Cref{prop:discrep} we may further condition on $A$ lying in the event that it is $(p/2,p/8)$-broadly connected.
Propositions \ref{prop:least} and \ref{prop:small} now follow from \Cref{prop:broad} and \Cref{thm:broad_small} with $\Sig=A$.

\section{Control of moderately small singular values}		\label{sec:wegner}

In this section we prove \Cref{thm:broad_small}.
While the traditional approach to obtaining such estimates (going back to the work of Bai \cite{Bai97}) is based on quantitative control on the rate of convergence of Stieltjes transforms, in \cite{TaVu:esd} Tao and Vu introduced a simpler and more direct geometric argument. The first key element of their approach is the so-called ``inverse second moment identity" (\eqref{i2m} below), which relates the size of the singular values of a rectangular matrix to the distances between rows and the span of the remaining rows. 

The second key element is a high probability lower bound for the distance between a random row vector $R$ and a fixed subspace $W$ of moderately large codimension $k$ (in our case of size $k\sim n^\eps$).
This distance can be expressed in the form $\|P_{W^\perp}R\|$, where $P_{W^\perp}$ is the matrix for projection to the orthogonal complement of $W$. 
If $R$ has i.i.d.\ centered components with unit variance, then
\begin{equation}	\label{e.iidcase}
\e \dist(R,W)^2 = \e \|P_{W^\perp}R\|^2 = \e R^\tran P_{W^\perp} R = \tr(P_{W^\perp}) = k.
\end{equation}
A high probability lower bound on $\dist(R,W)$ is then deduced from concentration of measure -- in this case Talagrand's isoperimetric inequality (after a truncation). 

The main new difficulty for proving \Cref{thm:broad_small} stems from the fact that the entries of $M$ are not identically distributed, in which case the computation \eqref{e.iidcase} breaks down, and in general the expected distance between a row and a fixed subspace can be quite small.
However, from a result in \cite{Cook:ssv} (\Cref{prop:slight} below), it turns out that under the broad connectivity assumption the orthogonal complement of a large number of rows is in ``generic position" in a certain sense. Specifically, any unit normal vector for a large number of rows is highly \emph{incompressible} (see \eqref{def:incompr} below). This can be used to obtain a lower bound on $\e \dist(R,W)^2$ that only loses a constant factor from the identity \eqref{e.iidcase}. (Observe that in the i.i.d.\ case the only information we used about $W$ was its dimension.)

We turn to the details. The following reduces our task to obtaining lower bounds on the distance between a fixed row and the span of most of the other rows.

\begin{lemma}	\label{lem:distlb}
Let $M\in \mM_n(\C)$ and $0\le k\le n-1$.
Put $m=n-\lceil k/2\rceil$. Denote the rows of $M$ by $R_1,\dots, R_n$, and for $i\in[m]$ denote 
\begin{equation}	\label{def:Rminus}
R_{-i}:= \Span \big\{R_j: j\in [m]\setminus \{i\}\big\}.
\end{equation}
We have
\begin{equation}	\label{lb:distance}
s_{n-k}(M) \gg \sqrt{\frac{k}{n}} \min_{i\in [m]} \dist(R_i,R_{-i}).
\end{equation}
\end{lemma}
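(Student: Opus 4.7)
The plan is to apply the negative second moment identity of Tao--Vu to the $m \times n$ submatrix $M'$ formed by the first $m$ rows of $M$, and then transfer the resulting bound to $s_{n-k}(M)$ via singular value interlacing.

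First I would set $d_* = \min_{i \in [m]} \dist(R_i, R_{-i})$; we may assume $d_* > 0$ (else the inequality is vacuous), which in particular forces $M'$ to have full row rank. The negative second moment identity for rectangular matrices (a direct consequence of the fact that $\dist(R_i,R_{-i})^{-2}$ is the $i$th diagonal entry of $(M'M'^*)^{-1}$) gives
\begin{equation}	\label{plan:i2m}
\sum_{j=1}^m \frac{1}{s_j(M')^2} \;=\; \sum_{i=1}^m \frac{1}{\dist(R_i,R_{-i})^2} \;\le\; \frac{m}{d_*^2}.
\end{equation}

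Next I would exploit monotonicity of the singular values. Since $m = n - \lceil k/2\rceil$, we have $m - (n-k) + 1 = \lfloor k/2\rfloor + 1$, so the sum in \eqref{plan:i2m} contains at least $\lfloor k/2\rfloor + 1$ indices $j \in \{n-k,\dots,m\}$, each of which satisfies $s_j(M') \le s_{n-k}(M')$. Keeping only these terms yields
\begin{equation}
\frac{\lfloor k/2\rfloor+1}{s_{n-k}(M')^2} \;\le\; \frac{m}{d_*^2},
\end{equation}
hence $s_{n-k}(M') \ge \sqrt{(\lfloor k/2\rfloor+1)/m}\; d_* \gg \sqrt{k/n}\; d_*$.

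Finally, the Cauchy interlacing inequality for singular values (applied to the deletion of $n-m = \lceil k/2\rceil$ rows of $M$) gives $s_j(M) \ge s_j(M')$ for every $j \le m$; since $n-k \le m$, this yields $s_{n-k}(M) \ge s_{n-k}(M') \gg \sqrt{k/n}\; d_*$, as desired. There is no real obstacle here — the only minor point to handle is the case $d_* = 0$, where \eqref{plan:i2m} is interpreted with $\infty$ on both sides and the conclusion is trivial, and the choice of the constant $\lceil k/2\rceil$ in the definition of $m$ is what guarantees both that sufficiently many small singular values of $M'$ contribute to the sum and that interlacing can still be invoked.
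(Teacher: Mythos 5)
Your proof is correct and takes essentially the same route as the paper: delete the last $\lceil k/2\rceil$ rows, apply the negative second moment identity to the resulting rectangular matrix, keep only the $\lfloor k/2\rfloor+1$ smallest surviving singular-value terms, and transfer back to $M$ via Cauchy interlacing. The only (cosmetic) difference is that you count the retained indices a bit more carefully as $\lfloor k/2\rfloor+1$ where the paper writes $k/2$.
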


\begin{proof}
We follow the argument from \cite{TaVu:esd}. 
Denote $M'=M_{[m]\times[n]}$, the matrix obtained by removing the last $\lceil k/2\rceil$ rows from $M$.
By the Cauchy interlacing law,
\begin{equation}	\label{cil.moderate}
s_{n-k}(M)\ge s_{n-k}(M').
\end{equation}
On the other hand, from the inverse second moment identity (cf.\ \cite[Lemma A.4]{TaVu:esd}) we have
\begin{equation}	\label{i2m}
\sum_{i=1}^n s_i(M')^{-2} = \sum_{i=1}^M \dist(R_i,R_{-i})^{-2}
\end{equation}
and so
\begin{align*}
m \Big( \min_{i\in [m]} \dist(R_i,R_{-i})\Big)^{-2} &\ge \sum_{i=1}^m \dist(R_i,R_{-i})^{-2} \\
&\ge \sum_{i=n-k}^{n-\lceil k/2\rceil} s_i(M')^{-2} \\
&\ge \frac{k}{2}s_{n-k}(M')^{-2}.
\end{align*}
\eqref{lb:distance} now follows from the above and \eqref{cil.moderate} (noting that $m\ge n/2$). 
\end{proof}

Our next task is to show how the distances $\dist(R_i,R_{-i})$ can be controlled from below if we have certain structural information on the normal vectors of $R_{-i}$. 
We recall the following definitions from \cite{RuZe,Cook:ssv}.
For $m\ge1$ and $\theta,\rho\in (0,1)$ we define the set of \emph{compressible vectors}
\begin{equation}	\label{def:compr}
\Comp_m(\theta,\rho) := S^{m-1}\cap\bigcup_{J\in {[m]\choose  \theta m}} (\C^J)_\rho
\end{equation}
and the complementary set of \emph{incompressible vectors}
\begin{equation}	\label{def:incompr}
\Incomp_m(\theta,\rho) := S^{m-1}\setminus \Comp(\theta,\rho)
\end{equation}
(recall our notational conventions from \Cref{sec:notation}).
That is, $\Comp_m(\theta,\rho)$ is the set of unit vectors within (Euclidean) distance $\rho$ of a vector supported on at most $\theta m$ coordinates.

\begin{lemma}[Distance of a random vector to an incompressible subspace]	\label{lem:dist}
Let $\xi$ be a centered complex-valued random variable with unit variance, and let $X=(\xi_1,\dots, \xi_n)$ be a vector of i.i.d.\ copies of $\xi$.
Let $\sig=(\sigma_1,\dots, \sigma_n)\in [0,1]^n$, and put $R=X\circ \sig=(\sigma_j\xi_j)_{j=1}^n$.
Suppose that for some $\delta,\ha\in (0,1)$ we have 
\[
|L|:=|\{j\in [n]:\sigma_j\ge \ha\}|\ge \delta n.
\]
Let $\eps\in (0,1)$ and let $V\subset\C^n$ be a subspace of dimension $n-k$.
Suppose that for some $\rho>0$ and any unit vector $u\in V^\perp$ we have $u\in \Incomp_n((1-\frac\delta2),\rho)$.
Then for any fixed $v\in \C^n$ we have
\begin{equation}
\pro{ \dist(R+v,V)\le c\rho\sqrt{\delta k}} =O_\eps(\exp(-c\ha^2\rho^2\delta k/n^{\eps}))
\end{equation}
if 
\begin{equation}	\label{dist:krange}
\frac{C n^{\eps}}{\ha^2\rho^2 \delta} \le k\le n-1
\end{equation}
for a sufficiently large constant $C>0$.
\end{lemma}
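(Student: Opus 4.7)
My plan is as follows. First I would set $P = P_{V^\perp}$, so $\dist(R+v,V) = \|P(R+v)\|$, reducing the problem to showing this random variable is large with high probability. Letting $u_1,\dots,u_k$ be an orthonormal basis for $V^\perp$, a short computation using $\e R = 0$ gives
\[
\e \|P(R+v)\|^2 \;=\; \sum_{j=1}^n \sigma_j^2 P_{jj} + \|Pv\|^2,
\]
where $P_{jj} = \sum_\ell |u_\ell(j)|^2$.

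The first key step is to use the incompressibility hypothesis to lower bound $\sum_j \sigma_j^2 P_{jj}$. Since $|L^c| \le (1-\delta)n < (1-\delta/2)n$ and each $u_\ell \in \Incomp_n(1-\delta/2,\rho)$, the definition of incompressibility forces $\|u_{\ell,L}\| > \rho$. Summing over $\ell$,
\[
\sum_{j\in L} P_{jj} \;=\; \sum_{\ell=1}^k \|u_{\ell,L}\|^2 \;\ge\; k\rho^2,
\]
and $\sigma_j\ge\ha$ for $j\in L$ then yields $\e\|P(R+v)\|^2 \ge \ha^2\rho^2 k$. With $c$ small enough (possibly depending on $\ha,\delta$), this far exceeds the target $(c\rho\sqrt{\delta k})^2$.

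Next, I would upgrade this $L^2$-bound to a high-probability estimate via a standard truncation-and-Talagrand argument. The map $x\in\C^n \mapsto \|P(\sigma\circ x+v)\|$ is convex and $1$-Lipschitz in the Euclidean norm, since $P$ has operator norm at most one and $\max_j|\sigma_j|\le 1$. I would truncate each $\xi_j$ at a level $\kappa_0\asymp\sqrt{n^\eps/\delta}$; the $\kappa$-spread hypothesis and the lower bound \eqref{dist:krange} on $k$ guarantee that truncation decreases $\e\|P(R+v)\|^2$ by at most a constant factor, and that the Talagrand variance scale $\kappa_0^2$ is dominated by $\ha^2\rho^2 k$, so that the mean and median of the truncated version both stay within a constant factor of $\ha\rho\sqrt{k}$. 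Talagrand's convex concentration inequality then produces
\[
\pro{\,\|P(R+v)\| \le \tfrac12\,\ha\rho\sqrt{k}\,} \;\ll\; \expo{-c\ha^2\rho^2 k/\kappa_0^2} \;=\; \expo{-c\ha^2\rho^2\delta k/n^\eps}.
\]
After absorbing a factor of $\ha/\sqrt{\delta}$ into the constant $c$, the event on the left contains $\{\|P(R+v)\| \le c\rho\sqrt{\delta k}\}$, yielding the stated bound.

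The main obstacle is the non-identical distribution of the entries of $R$: the naive iid identity $\e\dist(R,W)^2 = \mathrm{codim}(W)$ breaks down, and without structural hypotheses on $V$ the expected squared distance can be arbitrarily small (for example, if $P$ concentrates on columns where $\sigma_j=0$). The incompressibility assumption on $V^\perp$ is precisely tailored to rule out this degeneracy and to force $\tr(P I_L) \ge k\rho^2$, giving a substitute for the iid identity above. A secondary technical point is the calibration of $\kappa_0$: it must be large enough (via the $\kappa$-spread bound on truncated second moments) that the tail contribution to $\|PR\|$ is controlled, and small enough that the Talagrand concentration scale $\kappa_0$ produces the claimed exponent with the $n^\eps$ factor.
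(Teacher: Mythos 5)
Your proposal is correct and follows the same two-step strategy as the paper: lower-bound $\e\,\dist(R,V)^2$ via the incompressibility hypothesis, then upgrade to a tail bound by truncating the entries and applying Talagrand's convex-Lipschitz concentration inequality. Your incompressibility step is in fact a bit cleaner than the paper's: you observe directly that $\|u_{\ell,L}\|>\rho$ for each orthonormal $u_\ell\in V^\perp$ (since $|L^c|\le(1-\delta)n<(1-\delta/2)n$, a smaller value of $\|u_{\ell,L}\|$ would place $u_\ell$ within $\rho$ of $\C^{L^c}$, i.e.\ in $\Comp_n(1-\delta/2,\rho)$), whence $\sum_{j\in L}P_{jj}\ge k\rho^2$. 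The paper instead extracts, for each $u_\ell$, a set $J_\ell$ of $(1-\delta/2)n$ coordinates of magnitude $\ge\rho/\sqrt{n}$ and intersects with $L$, which yields the weaker per-vector bound $\|u_{\ell,L}\|^2\gtrsim\delta\rho^2$; your version saves the extra $\delta$ and gives the lemma with an absolute constant $c$ without effort. Two small caveats in your truncation step: (i) you invoke the $\kappa$-spread hypothesis, which is not part of the lemma's statement (the lemma assumes only unit variance), though the paper's proof in effect relies on the same fact — namely that the truncated second moment $\e|\xi|^2\un(|\xi|\le n^{\eps/2})$ is bounded below by a constant once $n$ is large — and in the application of the lemma $\xi$ is $\kappa$-spread anyway; (ii) truncation introduces a nonzero conditional mean $\lambda$ in the remaining entries, so one cannot simply assert $\e\|P(R+v)\|^2$ changes by a constant factor; the paper handles this by enlarging the target subspace to $W=\Span(V,R_{[n]\setminus J},\lambda\sigma_J)$ before invoking Talagrand, which is the recentering your sketch implicitly needs. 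Both points are routine to repair and do not affect the validity of the approach.
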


\begin{proof}
Fix $\eps\in (0,1)$.
We may assume $n$ is sufficiently large depending on $\eps$.
By replacing $V$ with $\Span(V,v)$ and $k$ with $k-1$ we may take $v=0$.

Towards an application of concentration of measure, we first perform a truncation.
By Chebyshev's inequality, for all $j\in \pr(|\xi_j|\ge n^{\eps/2}) \le n^{-\eps}$.
It follows from Hoeffding's inequality that 
\begin{equation}	\label{dist:Hoeffding}
\pro{ |\{j\in [n]: |\xi_j|\le n^{\eps/2}\}|\ge n - n^{1-\eps/2} } \ge 1-\exp(-cn^{1-\eps}).
\end{equation}
Put $m=\lceil n-n^{1-\eps/2}\rceil$, and for $J\in {[n]\choose m}$ denote the event
\begin{equation}
\event_J:= \{ |\xi_j|\le n^{\eps/2}\; \forall j\in J\}.
\end{equation}
It suffices to obtain control of the lower tail of $\dist(R,V)$ conditional on $\event_J$ that is uniform in the choice $J$.
For $j\in[n]$ let
\[
\lambda:= \e\big(\xi_j\,\big|\, |\xi_j|\le n^{\eps/2}\big),\quad\quad \xi_j' :=\xi_j-\lambda
\]
and write $R'=(\xi_1',\dots,\xi_n')$.
We have
\[
\tau^2:= \e\big(|\xi_j'|^2\, \big|\, |\xi_j|\le n^{\eps/2}\big) \ge1/2.
\]

Fix $J\in {[n]\choose m}$.
Condition on a realization of $\{\xi_j\}_{j\notin J}$, and write $\pr_J(\,\cdot\,)$ and $\e_J(\,\cdot\,)$ for probability and expectation conditional on $\{\xi_j\}_{j\notin J}$. 
Let $W=\Span( V, R_{[n]\setminus J}, \lambda \sig_J)$; note that $W$ is deterministic under the conditioning on $\{\xi_j\}_{j\notin J}$. Then $\dim(W)\le \dim(V)+2$ and 
\begin{equation}	\label{dist.prime}
\dist(R,V) \ge \dist(R',W).
\end{equation}
Note that $\dist(R',W)=\|P_{W^\perp}R\|$, where $P_{W^\perp}$ is the orthogonal projection to $W^\perp$. 
Letting $u^1,\dots, u^{k-2}$ be an arbitrary set of orthonormal vectors in $W^\perp$, we have
\begin{equation}
\e_J \dist(R',W)^2 \ge \sum_{i=1}^{k-2} \e |R'\cdot u^i|^2 = \tau^2\sum_{i=1}^{k-2} \sum_{j=1}^n |u^i_j\sigma_j|^2
\ge \frac12\ha^2 \sum_{i=1}^{k-2} \|(u^i)_L\|^2.
\end{equation}
For each $1\le i\le k-2$, since $u^i\in \Incomp_n((1-\frac\delta2),\rho)$ there is a set $J_i\subset[n]$ with $|J_i|\ge (1-\frac{\delta}2)n$ and $|u^i_j|\ge \rho/\sqrt{n}$ for all $j\in J_i$. Indeed, if this were not the case then the projection of $u^i$ to its largest $(1-\frac\delta2)n$ coordinates would be a $(1-\frac\delta2)n$-sparse vector lying a distance at most $\rho$ away from $u^i$, which contradicts $u^i\in \Incomp_n((1-\frac\delta2),\rho)$.
Thus, we have $\|(u^i)_{L}\|^2\ge \|(u^i)_{L\cap J_i}\|^2 \ge \frac12\delta \rho^2$.
Hence,
\begin{equation}	\label{lb:eJd}
\e_J\dist(R',W)^2 \ge c\delta\rho^2\ha^2k
\end{equation}
for some absolute constant $c>0$.

It remains to obtain a lower tail bound for $\dist(R',W)^2$.
Note that $v\mapsto \dist(\,\cdot \,,W)$ is a convex 1-Lipschitz function on $\C^J$.
Since the components of $R'$ are bounded in magnitude by $2n^{\eps/2}$, by Talagrand's concentration inequality \cite[Theorem 6.6]{Talagrand:newlook} (see also \cite[Corollary 4.4.11]{AGZ:book}) we have
\begin{equation}
\pr_J(|\dist(R',W)-d|\ge t) = O(\exp(-ct^2/n^{\eps}))
\end{equation}
for any $t\ge0$, where $d$ is any median for $\dist(R',W)$ conditional on $\{\xi_j\}_{j\notin J}$. 
In particular, $|\e_J\dist(R',W)-d|=O(n^{\eps/2})$ and $\var(\dist(R',W))=O(n^{\eps})$, so 
\[
d = \sqrt{\e \dist(R',W)^2} +O(n^{\eps/2}).
\]
Together with \eqref{lb:eJd} these estimates imply
\begin{equation}
\pr_J\Big(\dist(R',W)\le c\ha\rho\sqrt{\delta k}-Cn^{\eps/2}\,\Big) = O\big(\exp(-c\ha^2\rho^2\delta k/n^{\eps})\big)
\end{equation}
for some absolute constants $C,c>0$.
Undoing the conditioning on $\{\xi_j\}_{j\notin J}$, from the above and \eqref{dist.prime} we have
\begin{equation}
\pr\big( \dist(R,V) \le c\ha\rho\sqrt{\delta k}-Cn^{\eps/2} \,\big| \, \event_J \big) =O\big(\exp(-c\ha^2\rho^2\delta k/n^{\eps})\big).
\end{equation}
Summing over $J$ and using \eqref{dist:Hoeffding}, we conclude
\begin{align*}
\pro{ \dist(R,V) \le c\ha\rho\sqrt{\delta k} - Cn^{\eps/2}} = O(\exp(-c\ha^2\rho^2\delta k/n^{\eps}))
\end{align*}
and the result follows from the lower bound in \eqref{dist:krange}.
\end{proof}

We will need the following technical result, which is Proposition 3.1 from \cite{Cook:ssv}. It shows that under the broad connectivity hypothesis, the matrix $M$ is well-invertible on compressible vectors, even after the removal of some rows.

\begin{proposition}[Compressible vectors]	\label{prop:slight}
Let $\M=\Sig\circ X+Z$ be as in Definition \ref{def:broadprofile} with $n/2\le m\le 2n$. 
Let $K_0\ge1$. 
There exist $\theta_0(\kapa,\ha,\delta,K_0)>0$ and $\rho(\kapa,\ha,\delta,\nu,K_0)>0$ such that the following holds. 
Assume
\begin{enumerate}[(1)]
\item $|\mN_{\Sig(\ha)^\tran}(j)|\ge \delta n$ for all $j\in [m]$; \vspace{.2cm}
\item $|\mN_{\Sig(\ha)^\tran}^{(\delta)}(J)| \ge \min((1+\nu)|J|,n)$ for all $J\subset[m]$ with $|J|\ge \theta_0m$.
\end{enumerate}
Then for any $0<\theta\le (1-\frac\delta4)\min(\frac{n}m,1)$, 
\begin{equation}
\pro{\|M\|\le K_0\sqrt{n}, \exists u\in \Comp_m(\theta,\rho): \|Mu\|\le \rho K_0\sqrt{n}}\ll \expo{-c_0\delta\ha^2n}
\end{equation}
where $c_{0}>0$ depends only on $\kapa$ and the implied constant depends only on $\kapa,\ha,\delta,\nu$ and $K_0$.
\end{proposition}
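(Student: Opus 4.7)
My plan is the standard $\eps$-net argument for compressible vectors, reducing the event to a single-direction small-ball inequality that leverages both broad-connectivity hypotheses together with the $\kapa$-spread of $\xi$.

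\emph{Reduction to a sparse net.} Since $\Comp_m(\theta,\rho)$ is monotone in $\theta$, we may restrict to $\theta=\theta_0$ and set $m':=\lceil\theta_0 m\rceil$. On $\{\|M\|\le K_0\sqrt n\}$, any witness $u\in \Comp_m(\theta_0,\rho)$ for the event lies within Euclidean distance $2\rho$ of a unit vector $v\in S^J$ for some $J\in\binom{[m]}{m'}$, and then $\|Mv\|\le 3\rho K_0\sqrt n$. Fixing for each $J$ a $\rho$-net $\mN_J\subset S^J$ of cardinality at most $(C/\rho)^{2m'}$ and replacing $v$ by its closest element $w\in \mN_J$ gives $\|Mw\|\le 4\rho K_0\sqrt n$. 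It thus suffices to show
\begin{equation}		\label{plan:singleball}
\pro{\|Mw\|\le 4\rho K_0\sqrt n}\le \expo{-c_1\ha^2\delta n}
\end{equation}
for each fixed $w\in S^J$, and then union-bound over the $\binom{m}{m'}|\mN_J|\le \expo{Cm'\log(e/(\theta_0\rho))}$ choices. Since $m\le 2n$, this entropy cost is dominated by $\expo{(c_1/2)\ha^2\delta n}$ provided $\theta_0$ is chosen polynomially small in $\ha^2\delta/\log(1/\rho)$, with the dependence on $\kapa,\nu,K_0$ entering through $\rho$ and $c_1$.

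\emph{Single-direction small-ball.} Let $I:=\mN^{(\delta)}_{\Sig(\ha)^\tran}(J)$; hypothesis (2) gives $|I|\ge (1+\nu)m'$, and for each $i\in I$ the set $J_i:=\{j\in J:\sigma_{ij}\ge\ha\}$ has $|J_i|\ge\delta m'$. The first-moment computation
\[
\e\|Mw\|^2 \ge \sum_i\sum_{j\in J}\sigma_{ij}^2|w_j|^2 = \sum_{j\in J}|w_j|^2\sum_i\sigma_{ij}^2 \ge \ha^2\delta n
\]
uses hypothesis (1) on column supports. Since the rows $\{\langle M_i,w\rangle\}_{i\in[n]}$ are jointly independent, and since $\xi$ is $\kapa$-spread (after truncating at level $\kapa$, the retained variance is $\ge 1/\kapa$), a L\'evy--Kolmogorov--Rogozin type anti-concentration estimate shows that for each $i\in I$, the event $\mE_i:=\{|\langle M_i,w\rangle - t_i|^2 \ge c_2\ha^2\|w_{J_i}\|^2\}$ has probability at least some constant $c_3(\kapa)>0$, for an appropriate deterministic shift $t_i$. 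A Bernoulli tensorization (Hoeffding on the indicators of $\mE_i$, $i\in I$) then delivers \eqref{plan:singleball} at the claimed rate.

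\emph{Main obstacle.} The delicate issue is producing the exponential tail at scale $\expo{-c\ha^2\delta n}$ in the presence of vectors $w$ whose mass concentrates on a small subset of coordinates, where individual row contributions $\|w_{J_i}\|^2$ may vanish and a Paley--Zygmund style averaging yields only a small fraction of active rows. The resolution is the row-wise independence combined with the double-counting identity
\[
\sum_{i\in[n]}\|w_{J_i}\|^2 = \sum_{j\in J}|w_j|^2\cdot|\mN_{\Sig(\ha)^\tran}(j)|\ge \delta n,
\]
which forces a macroscopic positive average across rows and allows the tensorization of single-row anti-concentration to yield an exponentially small tail even when individual rows give only constant-probability lower bounds. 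Careful tracking of the dependence on $\kapa$ through the truncated second moment yields the stated polynomial dependence of $c_0,\theta_0,\rho$ on $\kapa,\ha,\delta,\nu,K_0$.
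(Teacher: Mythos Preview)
Your reduction has the monotonicity backwards. The set $\Comp_m(\theta,\rho)$ is \emph{increasing} in $\theta$, so establishing the bound at $\theta=\theta_0$ (small) says nothing about the event for larger $\theta$. The proposition must be proved for the \emph{largest} admissible value $\theta=(1-\tfrac{\delta}{4})\min(\tfrac{n}{m},1)$, which can be arbitrarily close to $1$. At that scale your $\eps$-net has cardinality of order $\binom{m}{\theta m}(C/\rho)^{2\theta m}=\exp\big(\Theta(m\log(1/\rho))\big)$, while your single-vector tensorization yields only $\exp(-c(\kapa)\delta n)$; since $\rho$ must be taken of order $\ha\delta/K_0$ (or smaller) to match the threshold $\rho K_0\sqrt n$ to the anti-concentration scale, the entropy cost $m\log(1/\rho)$ dominates and the union bound fails.

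This is precisely why the proposition, as proved in \cite{RuZe} and \cite{Cook:ssv} (the present paper only cites the result), is not a one-shot net argument but an \emph{iterative} one. One first handles genuinely sparse vectors $\Comp_m(\theta_0,\rho_0)$ by a net exactly as you describe---this is the base case and your argument is fine there, using only hypothesis~(1). One then bootstraps: assuming control on $\Comp_m(\theta_{k},\rho_{k})$, one extends to $\Comp_m(\theta_{k+1},\rho_{k+1})$ with $\theta_{k+1}\approx(1+\nu)\theta_k$, and it is here that hypothesis~(2) does real work. For a vector supported on $J$ with $|J|\ge\theta_0 m$, the expansion $|\mN^{(\delta)}_{\Sig(\ha)^\tran}(J)|\ge(1+\nu)|J|$ supplies enough \emph{additional} independent rows with good anti-concentration to beat the \emph{incremental} entropy in passing from scale $\theta_k$ to $\theta_{k+1}$; the gain is governed by $\nu$, which is why $\rho$ ultimately depends on $\nu$. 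In your write-up hypothesis~(2) is invoked only to record $|I|\ge(1+\nu)m'$ and then never used---a symptom of the missing inductive structure. The number of scales is $O_\nu(\log(1/\theta_0))$, and one must track how $\rho_k$ degrades along the cascade to arrive at the final $\rho(\kapa,\ha,\delta,\nu,K_0)$.
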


\begin{remark}
In \cite[Proposition 3.1]{Cook:ssv} is stated under a more general distributional hypothesis for $\xi$; see \cite[Lemma 2.5]{Cook:ssv}.
\end{remark}

\begin{proof}[Proof of \Cref{thm:broad_small}]
For the duration of the proof we restrict to the event $\{\|M\|\le K_0\sqrt{n}\}$.
We may assume that $n$ is sufficiently large depending on $\kapa,\eps,\ha,\delta,\nu$, and $K_0$.
By the union bound it suffices to show that for $a_2$ sufficiently small depending on $\kapa,\ha,\delta,\nu,K_0$,
\begin{equation}	\label{goal:small1}
\pro{ s_{n-k}(M) \le a_2k/\sqrt{n}} = O(n\exp(-a_0n^{\eps}))
\end{equation}
for arbitrary fixed $k\in [n^{2\eps},a_1n]$. 
By \Cref{lem:distlb} and another application of the union bound, after modifying $a_2$ by a constant factor it suffices to show
\begin{equation}	\label{goal:small2}
\pro{ \dist(R_i,V_I) \le a_2\sqrt{k}} = O(\exp(-a_0n^\eps))
\end{equation}
where $V_I=\Span\{R_i, i\in I\}$ for an arbitrary fixed subset $I\subset[n]$ with $|I|=n-\lf k/2\rf -1=:n'$ and arbitrary fixed $i\in [n]\setminus I$.

Fix such $I\subset[n]$ and $i\in [n]\setminus I$. 
Let $\theta_0$ be as in \Cref{prop:slight} and let $J\subset[n]$ with $|J|\ge \theta_0n$.
Denoting $\widetilde{\Sig}:= \Sig(\ha)_{I\times[n]}$, by the assumption that $\Sig$ is $(\ha,\delta,\nu)$-broadly connected we have
\[
|\mN_{\widetilde{\Sig}^\tran}^{(\delta)}(J)| \ge \min\big(n', (1+\nu)|J| - \lceil k/2\rceil -1\big) \ge \min\big(n',(1+\nu/2)|J|\big)
\]
taking $a_1\le c\nu\theta_0$ for a sufficiently small constant $c>0$.
Applying \Cref{prop:slight} to $M_{I\times [n]}$ (with $(n',n)$ in place of $(n,m)$ and $(\delta/2,\nu/2)$ in place of $(\delta,\nu)$) we have that except with probability $O(\exp(-\frac12c_0\delta\ha^2n))$,
\begin{equation}	\label{small:goodevent}
\|M_{I\times [n]}u\| \ge \rho K_0\sqrt{n}\quad \forall u\in \Comp_n\left(\left(1-\frac\delta4\right)\frac{n'}{n}, \rho\right)
\end{equation}
for some $\rho=\rho(\kapa,\ha,\delta,\nu,K_0)>0$.
In particular, taking $a_1$ smaller if necessary, we have 
\[
\left(1-\frac\delta4\right)\frac{n'}n\ge \left(1-\frac\delta4\right) \left(1-\frac{a_1}{2}\right)  \ge 1-\frac\delta2
\]
and so
\begin{equation}
S^{n-1}\cap V_I^\perp = \ker(M_{I\times[n]}) \subset \Incomp_n(1-\delta/2,\rho).
\end{equation}
We may condition on the event that \eqref{small:goodevent} holds.
Conditional on $M_{I\times [n]}$, we apply \Cref{lem:dist} with $R+v=R_i$ and $V=V_I$ to obtain \eqref{goal:small2} as desired.
\end{proof}

\bibliographystyle{alpha}
\bibliography{rrd_signed}

\newcommand{\etalchar}[1]{$^{#1}$}
\begin{thebibliography}{AGB{\etalchar{+}}15}

\bibitem[AC15]{AdCh}
Rados{\l}aw Adamczak and Djalil Chafa{\"{\i}}.
\newblock Circular law for random matrices with unconditional log-concave
  distribution.
\newblock {\em Commun. Contemp. Math.}, 17(4):1550020 (22 pages), 2015.

\bibitem[ACW16]{ACW:exchangeable}
Rados{\l}aw Adamczak, Djalil Chafa{\"{\i}}, and Pawe{\l} Wolff.
\newblock Circular law for random matrices with exchangeable entries.
\newblock {\em Random Structures Algorithms}, 48(3):454--479, 2016.

\bibitem[AGB{\etalchar{+}}15]{AGBTAM15:foodwebs}
S.~Allesina, J.~Grilli, G.~Barab{\'a}s, S.~Tang, J.~Aljadeff, and A.~Maritan.
\newblock Predicting the stability of large structured food webs.
\newblock {\em Nature communications}, 6(7842), 2015.

\bibitem[AGZ10]{AGZ:book}
Greg~W. Anderson, Alice Guionnet, and Ofer Zeitouni.
\newblock {\em An introduction to random matrices}, volume 118 of {\em
  Cambridge Studies in Advanced Mathematics}.
\newblock Cambridge University Press, Cambridge, 2010.

\bibitem[ARS15]{ARS:block}
Johnatan Aljadeff, David Renfrew, and Merav Stern.
\newblock Eigenvalues of block structured asymmetric random matrices.
\newblock {\em J. Math. Phys.}, 56(10):103502, 14, 2015.

\bibitem[Bai97]{Bai97}
Z.~D. Bai.
\newblock Circular law.
\newblock {\em Ann. Probab.}, 25(1):494--529, 1997.

\bibitem[BC12]{BoCh:survey}
Charles Bordenave and Djalil Chafa{\"{\i}}.
\newblock Around the circular law.
\newblock {\em Probab. Surv.}, 9:1--89, 2012.

\bibitem[BCC11]{BCC:heavy}
Charles Bordenave, Pietro Caputo, and Djalil Chafa{\"{\i}}.
\newblock Spectrum of non-{H}ermitian heavy tailed random matrices.
\newblock {\em Comm. Math. Phys.}, 307(2):513--560, 2011.

\bibitem[BCZ]{BCZ:perm}
Anirban Basak, Nicholas Cook, and Ofer Zeitouni.
\newblock Circular law for the sum of random permutation matrices.
\newblock Preprint at arXiv:1705.09053.

\bibitem[BD13]{BaDe}
Anirban Basak and Amir Dembo.
\newblock Limiting spectral distribution of sums of unitary and orthogonal
  matrices.
\newblock {\em Electron. Commun. Probab.}, 18:no. 69, 19, 2013.

\bibitem[BHY16]{BHY:km}
Roland Bauerschmidt, Jiaoyang Huang, and Horng-Tzer Yau.
\newblock Local kesten--mckay law for random regular graphs.
\newblock Preprint at arXiv:1609.09052, 09 2016.

\bibitem[BKY]{BKY}
Roland Bauerschmidt, Antti Knowles, and Horng-Tzer Yau.
\newblock Local semicircle law for random regular graphs.
\newblock Preprint available at arXiv:1503.08702.

\bibitem[BS10]{BaSi10:book}
Zhidong Bai and Jack~W. Silverstein.
\newblock {\em Spectral analysis of large dimensional random matrices}.
\newblock Springer Series in Statistics. Springer, New York, second edition,
  2010.

\bibitem[CM05]{CaMc}
E.~Rodney Canfield and Brendan~D. McKay.
\newblock Asymptotic enumeration of dense 0-1 matrices with equal row sums and
  equal column sums.
\newblock {\em Electron. J. Combin.}, 12:Research Paper 29, 31 pp.
  (electronic), 2005.

\bibitem[Cooa]{Cook:rrdcirc}
Nicholas Cook.
\newblock The circular law for random regular digraphs.
\newblock Preprint available at arXiv:1703.05839.

\bibitem[Coob]{Cook:ssv}
Nicholas~A. Cook.
\newblock Lower bounds for the smallest singular value of structured random
  matrices.
\newblock Preprint at arXiv:1608.07347.

\bibitem[Coo16]{Cook:discrep}
Nicholas~A. Cook.
\newblock Discrepancy properties for random regular digraphs.
\newblock {\em Random Structures Algorithms}, 50:23--58, 2016.
\newblock doi:10.1002/rsa.20643.

\bibitem[Coo17]{Cook:sing}
Nicholas~A. Cook.
\newblock On the singularity of adjacency matrices for random regular digraphs.
\newblock {\em Probability Theory and Related Fields}, 167(1):143--200, Feb
  2017.
\newblock doi:10.1007/s00440-015-0679-8.

\bibitem[DP12]{DuPa}
Ioana Dumitriu and Soumik Pal.
\newblock Sparse regular random graphs: spectral density and eigenvectors.
\newblock {\em Ann. Probab.}, 40(5):2197--2235, 2012.

\bibitem[DS07]{DoSi07a}
R.~Brent Dozier and Jack~W. Silverstein.
\newblock On the empirical distribution of eigenvalues of large dimensional
  information-plus-noise-type matrices.
\newblock {\em J. Multivariate Anal.}, 98(4):678--694, 2007.

\bibitem[Ede97]{Edelman:circ}
Alan Edelman.
\newblock The probability that a random real gaussian matrix has k real
  eigenvalues, related distributions, and the circular law.
\newblock {\em Journal of Multivariate Analysis}, 60(2):203--232, 1997.

\bibitem[Gir84]{Girko84}
V.~L. Girko.
\newblock The circular law.
\newblock {\em Teor. Veroyatnost. i Primenen.}, 29(4):669--679, 1984.

\bibitem[GT10]{GoTi:circ}
Friedrich G{{\"o}}tze and Alexander Tikhomirov.
\newblock The circular law for random matrices.
\newblock {\em Ann. Probab.}, 38(4):1444--1491, 2010.

\bibitem[GZ00]{GuZe}
A.~Guionnet and O.~Zeitouni.
\newblock Concentration of the spectral measure for large matrices.
\newblock {\em Electron. Comm. Probab.}, 5:119--136 (electronic), 2000.

\bibitem[HL00]{HaLa}
Uffe Haagerup and Flemming Larsen.
\newblock Brown's spectral distribution measure for {$R$}-diagonal elements in
  finite von {N}eumann algebras.
\newblock {\em J. Funct. Anal.}, 176(2):331--367, 2000.

\bibitem[Jan95]{Janson:contiguity}
Svante Janson.
\newblock Random regular graphs: asymptotic distributions and contiguity.
\newblock {\em Combin. Probab. Comput.}, 4(4):369--405, 1995.

\bibitem[Lat05]{Latala}
Rafa{\l} Lata{\l}a.
\newblock Some estimates of norms of random matrices.
\newblock {\em Proc. Amer. Math. Soc.}, 133(5):1273--1282 (electronic), 2005.

\bibitem[LLT{\etalchar{+}}17]{LLTTY}
Alexander~E. Litvak, Anna Lytova, Konstantin Tikhomirov, Nicole
  Tomczak-Jaegermann, and Pierre Youssef.
\newblock Adjacency matrices of random digraphs: singularity and
  anti-concentration.
\newblock {\em J. Math. Anal. Appl.}, 445(2):1447--1491, 2017.

\bibitem[May72]{May72}
Robert~M May.
\newblock Will a large complex system be stable?
\newblock {\em Nature}, 238:413--414, 1972.

\bibitem[McK81]{McKay:esd}
Brendan~D. McKay.
\newblock The expected eigenvalue distribution of a large regular graph.
\newblock {\em Linear Algebra Appl.}, 40:203--216, 1981.

\bibitem[Meh67]{Mehta}
M.~L. Mehta.
\newblock {\em Random matrices and the statistical theory of energy levels}.
\newblock Academic Press, New York-London, 1967.

\bibitem[Ngu14]{Nguyen:uds}
Hoi~H. Nguyen.
\newblock Random doubly stochastic matrices: the circular law.
\newblock {\em Ann. Probab.}, 42(3):1161--1196, 2014.

\bibitem[PZ10]{PaZh}
Guangming Pan and Wang Zhou.
\newblock Circular law, extreme singular values and potential theory.
\newblock {\em J. Multivariate Anal.}, 101(3):645--656, 2010.

\bibitem[RA06]{RaAb:neural}
Kanaka Rajan and LF~Abbott.
\newblock Eigenvalue spectra of random matrices for neural networks.
\newblock {\em Physical review letters}, 97(18):188104, 2006.

\bibitem[RZ16]{RuZe}
Mark Rudelson and Ofer Zeitouni.
\newblock Singular values of {G}aussian matrices and permanent estimators.
\newblock {\em Random Structures Algorithms}, 48(1):183--212, 2016.

\bibitem[SCS88]{SCS:chaos}
H.~Sompolinsky, A.~Crisanti, and H.-J. Sommers.
\newblock Chaos in random neural networks.
\newblock {\em Phys. Rev. Lett.}, 61(3):259--262, 1988.

\bibitem[Tal96]{Talagrand:newlook}
Michel Talagrand.
\newblock A new look at independence.
\newblock {\em Ann. Probab.}, 24(1):1--34, 1996.

\bibitem[TV08]{TaVu:circ}
Terence Tao and Van~H. Vu.
\newblock Random matrices: the circular law.
\newblock {\em Commun. Contemp. Math.}, 10(2):261--307, 2008.

\bibitem[TV10]{TaVu:esd}
Terence Tao and Van~H. Vu.
\newblock Random matrices: universality of {ESD}s and the circular law.
\newblock {\em Ann. Probab.}, 38(5):2023--2065, 2010.
\newblock With an appendix by Manjunath Krishnapur.

\bibitem[TVW13]{TVW}
Linh~V. Tran, Van~H. Vu, and Ke~Wang.
\newblock Sparse random graphs: eigenvalues and eigenvectors.
\newblock {\em Random Structures Algorithms}, 42(1):110--134, 2013.

\bibitem[Woo12]{Wood:sparse}
Philip~Matchett Wood.
\newblock Universality and the circular law for sparse random matrices.
\newblock {\em Ann. Appl. Probab.}, 22(3):1266--1300, 2012.

\end{thebibliography}

\end{document}